\documentclass[11pt]{amsart}
\usepackage{booktabs}
\usepackage{tabularx}
\usepackage{amsmath,amsthm,accents,amssymb, amscd, tikz, tikz-cd}
\usepackage{graphicx,enumerate,stmaryrd}
\usepackage[all,cmtip]{xy}
\usetikzlibrary{decorations.markings}
\tikzstyle{vertex}=[circle, draw, inner sep=0pt, minimum size=6pt]

\usepackage{comment}
\usepackage[margin=1in]{geometry}
\usepackage[bookmarks]{hyperref}
\usepackage{verbatim}

\usepackage[shortlabels]{enumitem}

\theoremstyle{plain}
\newtheorem{thm}{Theorem}[subsection]

\newtheorem{lemma}[thm]{Lemma}

\theoremstyle{definition}
\newtheorem{definition}[thm]{Definition}

\theoremstyle{remark}
\newtheorem{remark}[thm]{Remark}

\newcommand{\Cb}{{\mathbf C}}

\newcommand{\Zb}{{\mathbf Z}}

\newcommand{\ep}{{\varepsilon}}

\numberwithin{equation}{section}
\numberwithin{table}{section}

\begin{document}
 
\author{Nikolay Grantcharov}
\title{Deformation theory of the Double Affine Hecke algebra of type $(C_n^\vee,C_n)$}
\date{June 2026}
\begin{abstract}
We study the double affine Hecke algebra (DAHA) of type $(C_n^\vee,C_n)$ from the perspective of deformation theory. First, we provide a zeros-and-residues realization of this algebra, extending the construction of Ginzburg, Kapranov, and Vasserot to the non-reduced affine root system setting. Specializing the parameters of the DAHA to the base point gives the crossed product of a quantum torus algebra with the finite Weyl group of type $C_n$. We then show that for all $n$, the completed DAHA is the formal universal deformation of this crossed product algebra, extending Oblomkov’s result for $n=1$. Our proof explicitly identifies the completed DAHA with the undeformed crossed product algebra equipped with a formal star product.
\end{abstract}
\maketitle

\section{Introduction}

Affine Hecke algebras were introduced by Iwahori and Matsumoto as deformations of the group algebras of affine Weyl groups. There is a Hecke algebra attached to each finite root system, and its parameters are indexed by orbits of the finite Weyl group on the root system. Their structure and representation theory were developed extensively by Lusztig\cite{Lusztig1989}. Cherednik subsequently introduced double affine Hecke algebras (DAHAs), which may be viewed as the Hecke algebras attached to affine root systems. Their parameters are indexed by orbits of the affine Weyl group on the affine root system. Cherednik used DAHAs to prove the Macdonald conjectures for reduced root systems. Sahi later extended the construction to the non-reduced affine root system of type $(C_n^\vee,C_n)$. This DAHA has five parameters for $n=1$ and six parameters for $n>1$, and it governs the Koornwinder polynomials, which are the most general family of Macdonald polynomials. Sahi used it to prove the remaining Macdonald conjectures in this setting \cite{Sahi1999}.

In this paper, we study the DAHA of type $(C_n^\vee,C_n)$ from the perspective of deformation theory to establish two new results about it.

First, in Theorems~\ref{zeros residues GKV} and~\ref{BEG zeros residues theorem}, we give a zeros-and-residues construction of the DAHA of type $(C_n^\vee,C_n)$, extending the constructions of \cite{GKV} and \cite{BaranovskyEvensGinzburgQuantumToriDAHA} from reduced to non-reduced affine root systems. These realizations describe the DAHA as a subalgebra of rational $q$-difference-reflection operators cut out by explicit vanishing and residue conditions. Such realizations of algebras have proved to be quite versatile. For example, in \cite{GKV}, this result was used to construct degenerate and elliptic analogues of affine and double affine Hecke algebras. More recently, analogous descriptions have appeared for Coulomb branches \cite{SchraderShapiroCoulombResidues, KlyuevResidueCoulombBranches} and, more generally, convolution algebras \cite{CrisanConvolutionAlgebras}. Our result places the non-reduced DAHA in the same framework. In view of the known rank-one case \cite{YoshidaQuantizedCoulombDAHA}, it also suggests a higher-rank Coulomb-branch realization of this DAHA: it would suffice to identify the corresponding Coulomb branch with the same zeros-and-residues subalgebra.

Second, in Theorem~\ref{DAHA universal deformation}, we show that the completed DAHA of type $(C_n^\vee,C_n)$ is the formal universal deformation of the crossed product of a quantum torus algebra with the finite Weyl group of type $C_n$. Previously, Oblomkov proved the result for $n=1$ \cite{Oblomkov2004}. Etingof and Oblomkov subsequently computed the Hochschild cohomology of the crossed product algebra for arbitrary $n$ \cite{EO}. In particular, their computation shows the number of parameters of the DAHA equals the dimension of the second Hochschild cohomology and the infinitesimal deformations are unobstructed since the odd cohomology groups vanish. However, this alone does not imply universality of the deformation since the deformations may a priori be trivial. \footnote{It was suggested to us by Etingof that one could follow the strategy of \cite{EtingofCherednikHeckeVarieties} and \cite{Vitanov2019} to check the deformations are non-trivial by taking completions near the codimension-two fixed-point loci of the $W$-action.}

We prove universality of the deformation by explicitly associating to each of the DAHA parameters a Hochschild $2$-cocycle and showing that the corresponding classes form a basis of the second Hochschild cohomology. In other words, we compute the Kodaira–Spencer morphism in terms of the DAHA parameters and show it is an isomorphism. The cocycles are constructed by comparing the PBW basis of the deformed DAHA, arising from Noumi’s polynomial representation \cite{Noumi1995}, with the standard PBW basis of its specialization. The same construction thus applies more generally to DAHAs admitting a PBW basis compatible with specialization. The main difficulty, however, is showing that the resulting cocycles span the entire second Hochschild cohomology.

\medskip

\noindent\textbf{Acknowledgements.}
The author thanks Victor Ginzburg for suggesting this problem and for many productive discussions. Additionally, the author thanks Pavel Etingof, Daniil Klyuev, and Sarah Witherspoon for helpful comments and conversations.

\section{Residue Construction of $CC_n^\vee$ DAHA}
In \cite{GKV}, Ginzburg, Kapranov, and Vasserot give a uniform construction of affine and double affine Hecke algebras from root data using explicit vanishing and residue conditions on coefficients. More precisely, \cite[Theorems~1.7 and 1.8]{GKV} show that finite root systems give rise to affine Hecke algebras, while reduced affine root systems give rise to double affine Hecke algebras. Here, we extend this construction to the non-reduced affine root system of type $(C_n^\vee,C_n)$ and recover Sahi’s six-parameter DAHA \cite{Sahi1999}.
\subsection{The $(C_n^\vee,C_n)$ root system}
First we recall the root system of type $C_n$. Fix $V=\oplus_{i=1}^{n}\mathbf{R}\ep_i$ and a standard inner product $\langle \ep_i,\ep_j\rangle =\delta_{ij}$. The roots, resp. coroots of type $C_n$ are 
\begin{align*}
R&=\{\pm\ep_i\pm\ep_j:i\neq j\}\cup\{\pm2\ep_i: i=1,\dots, n\}\subset V\\
R^\vee&:=\{\alpha^\vee:=\frac{2\alpha}{\langle \alpha,\alpha\rangle}:\alpha\in R\} = \{\pm\ep_i\pm\ep_j:i\neq j\}\cup\{\pm\ep_i:i=1,\dots, n\}
\end{align*}

The simple roots, resp. coroots are:
\begin{align*}
\Delta(R):=\{\alpha_1&=\ep_1-\ep_2, \dots, \alpha_{n-1}=\ep_{n-1}-\ep_n, \alpha_n=2\ep_n\}\\
\Delta(R^\vee):=\{\alpha_1^\vee&=\ep_1-\ep_2,\dots, \alpha_{n-1}^\vee=\ep_{n-1}-\ep_n, \alpha_n^\vee=\ep_n\}
\end{align*}

The positive roots are 
$$R^+=\{\ep_i\pm\ep_j:i< j\}\cup\{2\ep_i:i=1,\dots,n\},\;\;\; (R^\vee)^+:=\{\ep_i\pm\ep_j:i<j\}\cup\{\ep_i:i=1,\dots, n\}$$
The negative roots are $R^-=-R^+,(R^\vee)^-:=-(R^\vee)^+$, and in both cases, we have $R=R^+\cup R^-,R^\vee=(R^\vee)^+\cup(R^\vee)^-$.

The finite Weyl group of type \(C_n\) is
\[
W_0(C_n)
=
\left\langle s_1,\dots,s_n \ \middle|\
\begin{array}{ll}
s_i^2=1, & 1\leq i\leq n,\\[2pt]
s_i s_j=s_j s_i, & |i-j|>1,\\[2pt]
s_i s_{i+1}s_i=s_{i+1}s_i s_{i+1}, & 1\leq i\leq n-2,\\[2pt]
s_{n-1}s_n s_{n-1}s_n=s_n s_{n-1}s_n s_{n-1}
\end{array}
\right\rangle.
\]

Finally we define:
\begin{align*}
\text{ The root lattice }Q&:=\oplus_{i=1}^n\mathbf{Z}\alpha_i\\
\text{ The weight lattice } \Lambda &:= \oplus_{i=1}^n\mathbf{Z}\ep_i = Q^\vee
\end{align*}

Next, let us recall the affine root system of type $(C_n^\vee, C_n)$. Let $F$ denote the vector space of all affine linear functions from $V$ to $\mathbf{R}$. It may be identified with 
$$F\simeq V\oplus \mathbf{R}\delta.$$
\begin{definition}
The affine roots of type $C_n, C_n^\vee,(C_n^\vee,C_n)$ are respectively
\begin{align}
S&:=S(C_n):=\{\pm 2\ep_i+k\delta: k\in\mathbf{Z},i=1,\dots, n\}\cup\{\pm\ep_i\pm\ep_j + k\delta: k\in\mathbf{Z}, 1\leq i<j\leq n\}\\
S^\vee&:=S(C_n^\vee):=\{\pm \ep_i+\frac{k}{2}\delta: k\in\mathbf{Z},i=1,\dots, n\}\cup\{\pm\ep_i\pm\ep_j + k\delta: k\in\mathbf{Z}, 1\leq i<j\leq n\}\\
\tilde{S}&:=S(CC_n^\vee):=S\cup S^\vee
\end{align}
\end{definition}

The positive roots of $\tilde{S}$ are 
\begin{align*}\tilde{S}^+:=&\{\alpha+k\delta,\alpha^\vee+\frac{k}{2}\delta:\alpha\in R^+,k\in\mathbf{Z}_{\geq0},\alpha^\vee\in (R^+)^{\vee}\}\bigcup\\
&\{\alpha+k\delta,\alpha^\vee+\frac{k}{2}\delta:\alpha\in R^-,k\in\mathbf{Z}_{>0},\alpha^\vee\in (R^-)^{\vee}\}
\end{align*}
and the negative roots are $\tilde{S}^-=-\tilde{S}^+$.

There are two affine simple roots of type $(C_n^\vee,C_n)$ associated to the two reduced subsystems:
$$\alpha_0:=\delta-2\ep_1,\;\;\; \alpha_0^\vee:=\frac{1}{2}\delta-\ep_1.$$ 
The simple roots of type $(C_n^\vee,C_n)$ are thus 
$$\Delta(\tilde{S})=\Delta(S^\vee):=\{a_0^\vee=-\ep_1+\frac{1}{2}\delta,\; a_i^\vee=\ep_i-\ep_{i+1},1\leq i\leq n-1,\; a_n^\vee=\ep_n\}.$$

The corresponding affine Dynkin diagram of type $(C_n^\vee,C_n)$ is 
\[
\begin{tikzpicture}[baseline=-0.6ex, scale=1]
  \tikzset{
    vertex/.style={circle, fill=black, inner sep=1.8pt},
    vlabel/.style={font=\small}
  }

  \node[vertex] (0) at (0,0) {};
  \node[vertex] (1) at (1.4,0) {};
  \node[vertex] (2) at (2.8,0) {};
  \node[vlabel] (dots) at (4.2,0) {\(\cdots\)};
  \node[vertex] (n2) at (5.6,0) {};
  \node[vertex] (n1) at (7.0,0) {};
  \node[vertex] (n) at (8.4,0) {};

  \node[vlabel] at (0,-0.35) {\(0\)};
  \node[vlabel] at (1.4,-0.35) {\(1\)};
  \node[vlabel] at (2.8,-0.35) {\(2\)};
  \node[vlabel] at (5.6,-0.35) {\(n-2\)};
  \node[vlabel] at (7.0,-0.35) {\(n-1\)};
  \node[vlabel] at (8.4,-0.35) {\(n\)};

  \draw[double distance=2pt] (0) -- (1);
  \draw (1) -- (2);
  \draw (2) -- (dots);
  \draw (dots) -- (n2);
  \draw (n2) -- (n1);
  \draw[double distance=2pt] (n1) -- (n);
\end{tikzpicture}
\]

Let \(s_i:=s_{\alpha_i^\vee}\) denote the corresponding simple reflections.
Then the affine Weyl group of type \((C_n^\vee,C_n)\) may be presented as
\[
W:=W_{\mathrm{aff}}(C_n)
=
\left\langle s_0,s_1,\dots,s_n \ \middle|\
\begin{array}{ll}
s_i^2=1, & 0\leq i\leq n,\\[2pt]
s_i s_j=s_j s_i, & |i-j|>1,\\[2pt]
s_i s_{i+1}s_i=s_{i+1}s_i s_{i+1}, & 1\leq i\leq n-2,\\[2pt]
s_is_{i+1}s_is_{i+1}=s_{i+1}s_is_{i+1}s_i, & i=0,n-1 \\[2pt]
\end{array}
\right\rangle.
\]
\noindent The Coxeter groups associated to the affine root systems of type $\tilde{C}_n$, $\tilde{C}_n^\vee$ and $(C_n^\vee,C_n)$ are all the same, hence our notation $W:=W_{\mathrm{aff}}(C_n)$. We write $W_0$ for the underlying finite Weyl group. Thus
$$W=\tau(\bigoplus_{i=1}^n\mathbf{Z}\ep_i)\rtimes W_0$$
 where the $\tau(n_i\ep_i)=\tau(\ep_i)^{n_i}$ are translations by $n_i\in\mathbf{Z}$ units in the $\ep_i$ direction. Explicitly,
\begin{equation}\label{affine translation}
\tau(-\ep_i) = (s_{i-1}\cdots s_0)(s_1\cdots s_n)(s_{n-1}\cdots s_i)
\end{equation}
There is a natural action of $W$ on $V$ given by
\begin{align*} 
&s_0\cdot v=(-v_1-1,v_2,\dots, v_n),\\
&s_i\cdot v = (v_1,\dots, v_{i+1},v_i,\dots, v_n),\;\text{ for }1\leq i\leq n-1,\\
&s_n\cdot v=(v_1,\dots, v_{n-1},-v_n).
\end{align*}
This action naturally extends to an action of $W$ on $F$ given by
$$s_i(v+r\delta):= s_iv+r\delta, i\neq 0,\;\;\; s_0(v+r\delta)=(-v_1,v_2,\dots, v_n)+(r-v_1)\delta$$
We find that there are four $W$-orbits on $\tilde{S}$ when $n=1$ and five $W$-orbits on $\tilde{S}$ when $n>1$. To each orbit we associate a parameter as defined in the following table:

\begin{table}[ht]\label{CCn parameters}
\centering
\renewcommand{\arraystretch}{1.2}
\begin{tabular}{ccl}
Parameter & Representative & \(W\)-orbit \\[3pt]

\(u_n\) & \(\alpha_n^\vee=\varepsilon_n\)
& \(\{\pm\varepsilon_i+r\delta\}\) \\

\(t_n\) & \(\alpha_n=2\varepsilon_n\)
& \(\{\pm2\varepsilon_i+2r\delta\}\) \\

\(u_0\) & \(\alpha_0^\vee=\frac{\delta}{2}-\varepsilon_1\)
& \(\{\pm\varepsilon_i+(r+\frac12)\delta\}\) \\

\(t_0\) & \(\alpha_0=\delta-2\varepsilon_1\)
& \(\{\pm2\varepsilon_i+(2r+1)\delta\}\) \\

\(t\) & \(\alpha_i=\varepsilon_i-\varepsilon_{i+1}\)
& \(\{\pm\varepsilon_i\pm\varepsilon_j+r\delta\}\)
\end{tabular}
\caption{The parameters attached to the five \(W\)-orbits in
\(\tilde{S}\)}
\label{table:root-orbit-parameters}
\end{table}

We conclude this subsection by observing the natural $W$ action on $V$ determines a natural $W$ action on $\mathbf{Z}[q^{\pm1}][X_1^{\pm1},\dots, X_n^{\pm1}]$ via:
\begin{align*}
s_0f&=f(q^{-1}X_1^{-1},X_2,\dots, X_n)\\
s_i.f&=f(X_1,\dots, X_{i+1},X_i,\dots, X_n)\;\text{for i}\neq 0,n\\
s_n.f&:= f(X_1,\dots, X_{n-1},X_n^{-1})\\
\tau(\ep_i).f&:= T_{q,X_i}(f):= f(X_1,\dots, qX_i,\dots, X_n)
\end{align*}
Thus, we see $\tau(\ep_i)$, the affine translations, correspond to the $i^{th}$ $q$-shift operators acting on Laurent polynomials.

\subsection{(Double) Affine Hecke algebra associated to $(C_n^\vee,C_n)$ root system}
We follow Noumi's labeling of the parameters of the DAHA (see \cite{Noumi1995,Yamaguchi2022}) associated to the affine root system of type $(C_n^\vee,C_n)$. Define the parameters 
 \begin{align*}
     \underline{\kappa}&=(t_0^{1/2},t^{1/2},t_n^{1/2},u_0^{1/2},u_n^{1/2})
 \end{align*}
Consider the ground ring 
\begin{equation}\label{ground ring}
\mathbf{F}:=\mathbf{Z}[q^{\pm 1/2},t^{\pm 1/2},t_0^{\pm1/2},t_n^{\pm1/2},u_0^{\pm1/2},u_n^{\pm1/2}].
\end{equation}

\begin{definition}\label{rational functions on roots}
Let $\Lambda:=\bigoplus_{i=1}^n\mathbf Z\epsilon_i$
be the weight lattice of type \(C_n\). For an affine weight $\mu=\sum_{i=1}^n\lambda_i\ep_i+k\frac{\delta}{2}\in \Lambda+\frac{1}{2}\mathbf{Z}\delta$, we denote
\begin{equation}\label{roots to polynomial}
e^\mu:=q^{k/2}X_1^{\lambda_1}\cdots X_n^{\lambda_n}.
\end{equation}
\end{definition}

Let $T$ be the torus whose character lattice is $\Lambda$. Observe \footnote{We do not treat $e^\lambda$ as the functional which sends $\mu\in S$ to $q^{\langle\lambda,\mu\rangle}$ as in \cite{GKV} because we want to work formally, without ``evaluations.''}, $e^\lambda e^\mu=e^{\lambda+\mu}$ and 
\begin{equation}\label{e^alpha}
e^{\alpha_i} = \begin{cases} X_i/X_{i+1}&\text{ if }1\leq i\leq n-1\\  X_n^2 &\text{ if }i=n\\ qX_1^{-2}&\text{ if }i=0\end{cases},\quad e^{\alpha_i^\vee} = \begin{cases} X_i/X_{i+1}&\text{ if }1\leq i\leq n-1\\  X_n &\text{ if }i=n\\ q^{\frac{1}{2}}X_1^{-1}&\text{ if }i=0\end{cases}\end{equation}

\begin{definition}
    An affine root $\lambda\in \tilde{S}$ is \textit{real} if $\lambda=\lambda_0+k\delta$ where $\lambda_0\in V\setminus \{0\}$. Denote the set of positive real roots by $\tilde{S}_{Real}^+$. Similarly, denote the real roots of $S$ by $S_{\mathrm{Real}}$.
\end{definition}
\begin{definition}
   Define
\[
\mathbf F[T]
=
\mathbf F[e^\lambda:\lambda\in\Lambda]\simeq \mathbf{F}[X_1^{\pm1},\dots,X_n^{\pm1}].
\]
and
\[
\mathbf F[T]_{\mathrm{loc}}
:=
\mathbf F[T]\left[(1-e^\alpha)^{-1}:
\alpha\in S_{\mathrm{Real}}\right].
\]
\end{definition}
\noindent The localized algebra $\mathbf{F}[T]_{\mathrm{loc}}$ replaces the role of $\mathbf{F}(T):=\mathrm{Frac}(\mathbf{F}[T])$ in \cite{GKV}. The notation $\mathbf F[T]_{\mathrm{loc}}$ is consistent with the localized abelian Coulomb-branch algebras appearing in the
abelianization construction of \cite{BFN}. We work with the integral version because this behaves better under base change to characteristic $p$.

 \begin{definition}
 Define the affine Hecke algebra $H_{\underline{t}}(C_n)$ of type ${C}_n$ to be the algebra generated over $\mathbf{F}$ by $T_0^{\pm1},T_1^{\pm1},\dots, T_n^{\pm1}$, subject to the type $\tilde{C}_n$ braid relations and the quadratic relations
 $$T_i-T_i^{-1}=t_i^{1/2}-t_i^{-1/2}$$
  where $t_1=t_2=\dots=t_{n-1}=t$. 
  \end{definition}

The elements $T_1,\dots, T_n$ generate the finite Hecke algebra $H_0$ of type $C_n$. The analogs of the translations $\tau(\ep_i)$ are the elements 
\begin{equation}\label{Y_i def}
Y_i:=(T_i\cdots T_{n-1})(T_n\cdots T_0)(T_1^{-1}\cdots T_{i-1}^{-1}), \;\;i=1,\cdots, n
\end{equation}
Lusztig \cite{Lusztig1989} showed the $Y_i$ pairwise commute and generate a subalgebra $\mathbf{F}[Y_1^{\pm1},\dots, Y_n^{\pm1}]$ of $H_{\underline{t}}(C_n)$ such that multiplication gives an isomorphism 
$$H_0\otimes \mathbf{F}[Y_1^{\pm1},\dots, Y_n^{\pm1}]\xrightarrow{\sim} H_{\underline{t}}(C_n).$$

Now, there is also the standard polynomial representation, due to Noumi \cite{Noumi1995}, 
\begin{equation}\label{Noumi}
\pi:H_{\underline{t}}(C_n)\rightarrow \text{End}_{\mathbf{F}}(\mathbf{F}[X_1^{\pm1},\dots,X_n^{\pm1}]),
\end{equation}
\begin{align}\label{Noumi polynomial}
T_i^{\pm1}&\mapsto t_i^{\pm \frac{1}{2}}+t_i^{-\frac{1}{2}}\frac{1-t_iX_i/X_{i+1}}{1-X_i/X_{i+1}}(s_i-1),\;\; i=1,\dots, n-1\\
T_0^{\pm1}&\mapsto t_{0}^{\pm \frac{1}{2}}+t_0^{-\frac{1}{2}}\frac{(1-q^{\frac{1}{2}}u_0^{\frac{1}{2}}t_0^{\frac{1}{2}}X_1^{-1})(1+q^{\frac{1}{2}}u_0^{-\frac{1}{2}}t_0^{\frac{1}{2}}X_1^{-1})}{1-qX_1^{-2}}(s_0-1)\\
T_n^{\pm1}&\mapsto t_n^{\pm \frac{1}{2}}+t_n^{-\frac{1}{2}}\frac{(1-u_n^{\frac{1}{2}}t_n^{\frac{1}{2}}X_n)(1+u_n^{-\frac{1}{2}}t_n^{\frac{1}{2}}X_n)}{1-X_n^2}(s_n-1)
\end{align}
\begin{remark}
Stokman gives an equivalent realization of the same
polynomial representation. Namely, in \cite[Theorem 9.2.3]{Stokman2000}, the
affine Hecke algebra embeds into \(\mathbf{F}[T]_{\mathrm{loc}}\rtimes W\), and the image of
\(T_i\) is written in terms of parameters \(q_a\) attached to \(W\)-orbits of
affine roots. The transition between Stokman's and Noumi's parameters is: 
\[
(q_0,q_{2\alpha_0},q_n,q_{2\alpha_n},q_i)
=
(t_0^{1/2},u_0^{1/2},t_n^{1/2},u_n^{1/2},t_i^{1/2}),
\qquad 1\leq i\leq n-1.
\]
For reduced simple roots \(1\leq i\leq n-1\), this specializes to the usual
Demazure--Lusztig coefficient. Thus Stokman's embedding and Noumi's polynomial
representation are the same representation, written respectively inside the
rational crossed product \(\mathbf{F}[T]_{\mathrm{loc}}\rtimes W\) and as operators on
\(\mathbf F[X_1^{\pm1},\dots,X_n^{\pm1}]\).
\end{remark}

Finally, we give the definitions of the double affine Hecke algebra $H_{q,\underline{\kappa}}(CC_n^\vee)$ associated to the $(C_n^\vee,C_n)$-root system $\tilde{S}$ using the original definition of \cite{Sahi1999}.

\begin{definition}\label{generators and relations DAHA}
Define $H_{q,\underline{\kappa}}(CC_n^\vee)$ to be the $\mathbf{F}$-algebra generated by $T_i^{\pm1},i=0,1,\dots, n$ and commuting elements $X_i^{\pm1},i=1,\dots, n$, subject to the following six relations: 
 We use the notation 
$$T_i\sim t_i \text{ means } T_i-T_i^{-1}=t_i^{1/2}-t_i^{-1/2}.$$
 \begin{enumerate}
 \item $T_i\sim t_i$
 \item The \(T_i\)'s satisfy the affine \(\widetilde C_n\) braid relations:
    \[
    T_iT_j=T_jT_i,\qquad |i-j|>1,
    \]
    \[
    T_iT_{i+1}T_i=T_{i+1}T_iT_{i+1},
    \qquad 1\leq i\leq n-2,
    \]
    and
    \[
    T_iT_{i+1}T_iT_{i+1}
    =
    T_{i+1}T_iT_{i+1}T_i,
    \qquad i=0,n-1.
    \]

 \item $T_iX_j=X_jT_i$ if $|i-j|>1$ or $(i,j)=(n,n-1)$
 \item $T_iX_{i}=X_{i+1}T_i^{-1}$ for $i=1,2,\dots, n-1$
 \item $X_n^{-1}T_n^{-1}\sim u_n$
 \item $q^{-1/2}T_0^{-1}X_1\sim u_0$
\end{enumerate}

\end{definition}

The affine Hecke algebra \(H_{\underline t}(C_n)\) is the subalgebra of $H_{q,\underline{\kappa}}(CC_n^\vee)$ generated by \(T_0,\dots,T_n\). The elements \(X_1^{\pm1},\dots,X_n^{\pm1}\) generate the
Laurent polynomial subalgebra. 

Noumi's polynomial representation gives an action of \(H_{\underline t}(C_n)\)
on $\mathbf F[X_1^{\pm1},\dots,X_n^{\pm1}]$
by the formulas in \eqref{Noumi}. Extending this action by letting the generator
\(X_i\) act by multiplication by \(X_i\), one obtains a representation of
\(H_{q,\underline{\kappa}}(CC_n^\vee)\). By \cite[Theorem 3.2]{Sahi1999}, this
representation is faithful (in fact, simple). Thus we may equivalently regard
\(H_{q,\underline{\kappa}}(CC_n^\vee)\) as the subalgebra
\[
H_{q,\underline{\kappa}}(CC_n^\vee)
\subset
\operatorname{End}_{\mathbf F}
\bigl(\mathbf F[X_1^{\pm1},\dots,X_n^{\pm1}]\bigr)
\]
generated by Noumi's operators \(T_0,\dots,T_n\) together with multiplication by
\(X_1^{\pm1},\dots,X_n^{\pm1}\).

\subsection{Zeros and Residues construction of $H_{q,\underline{\kappa}}(CC_n^\vee)$}
We now explain an equivalent definition of $H_{q,\underline{\kappa}}(CC_n^\vee)$ using zeros and residues, in the spirit of \cite{GKV}.

Following Yamaguchi's notation \cite{Yamaguchi2022}, for a root $\alpha\in S_{\mathrm{Real}}$ set
\begin{equation}\label{u_alpha t_alpha def}
u_\alpha:=
\begin{cases}
1, & \alpha\in W\cdot\alpha_i,\quad 1\leq i\leq n-1,\\
u_0, & \alpha\in W\cdot\alpha_0,\\
u_n, & \alpha\in W\cdot\alpha_n,
\end{cases}
\qquad
t_\alpha:=
\begin{cases}
t, & \alpha\in W\cdot\alpha_i,\quad 1\leq i\leq n-1,\\
t_0, & \alpha\in W\cdot\alpha_0,\\
t_n, & \alpha\in W\cdot\alpha_n.
\end{cases}
\end{equation}

For \(0\leq i\leq n\) and simple root $\alpha_i\in S$, define $u_i:=u_{\alpha_i}$ and $t_i:=t_{\alpha_i}$. Further define
\begin{align*}
c_i(z)&:=
t_i^{-\frac{1}{2}}
\frac{
(1-u_i^{\frac{1}{2}}t_i^{\frac{1}{2}}z^{\frac{1}{2}})
(1+u_i^{-\frac{1}{2}}t_i^{\frac{1}{2}}z^{\frac{1}{2}})
}
{1-z}\\
d_i(z)&:=
t_i^{\frac{1}{2}}-c_i(z)
=
\frac{
(t_i^{\frac{1}{2}}-t_i^{-\frac{1}{2}})
+
(u_i^{\frac{1}{2}}-u_i^{-\frac{1}{2}})z^{\frac{1}{2}}
}
{1-z}.
\end{align*}
Finally, define
\begin{equation}\label{beta_i basis}
\beta_i:=\pi(T_i)
=
c_i(e^{\alpha_i})[s_i]+d_i(e^{\alpha_i})[1]\in\mathbf{F}[T]_{\mathrm{loc}}\rtimes W.
\end{equation}
where $e^{\alpha_i}$ are as defined in Equation \ref{e^alpha}. Notice, for the reduced roots, i.e., when \(1\leq i\leq n-1\), the numerator of $c_i(z)$ simplifies as
\[
(1-t^{1/2}z^{1/2})(1+t^{1/2}z^{1/2})=1-tz.
\]
Thus $\beta_i$ simplifies to the familiar Demazure-Lusztig operators:
\[
\beta_i
=
\frac{t^{-\frac{1}{2}}-t^{\frac{1}{2}}X_i/X_{i+1}}{1-X_i/X_{i+1}}[s_i]
+
\frac{t^{\frac{1}{2}}-t^{-\frac{1}{2}}}{1-X_i/X_{i+1}}[1],\;\; 1\leq i\leq n-1.
\]
Finally, for a root \(\alpha\in\widetilde S\), denote by
\[
T_{\alpha,a}:=\{x\in T:e^\alpha(x)=a\},
\qquad
T_\alpha:=T_{\alpha,1}.
\]

\begin{definition}\label{zeros and residue of DAHA}
Let \(\tilde{S}\) be the type \((C_n^\vee,C_n)\) root system and let $S:=S(C_n)$ be the reduced subsystem of affine type $C_n$. Recall Definition
\ref{rational functions on roots} of the space \(\mathbf{F}[T]_{\mathrm{loc}}\) of rational functions on \(T\). Let \(\mathbf{H}_{q,\underline{\kappa}}\) denote the
\(\mathbf F\)-linear subspace of
\[
\mathbf{F}[T]_{\mathrm{loc}}\rtimes W=\bigoplus_{w\in W}\mathbf{F}[T]_{\mathrm{loc}}[w]
\]
consisting of elements \(f=\sum_{w\in W}f_w[w]\)
such that:
\begin{enumerate}
\item Each coefficient \(f_w\) has at most first-order poles along the divisors
\(T_\alpha\), \(\alpha\in S_{\mathrm{Real}}^+\).

\item For each \(w\in W\) and \(\alpha\in S_{\mathrm{Real}}^+\), we have
\[
\operatorname{Res}_{T_\alpha}(f_w)
+
\operatorname{Res}_{T_\alpha}(f_{s_\alpha w})
=0.
\]

\item Recall the parameters $(t_\alpha,u_\alpha)$ as in \ref{u_alpha t_alpha def}. Define 
$$D(w):=S_{\mathrm{Real}}^+\cap w(S^-).$$ 
 Then for each $\alpha\in D(w)$, the function $f_w$ vanishes on 
\[
\widetilde T_\alpha:=
\begin{cases}
T_{\alpha,t_\alpha^{-1}},
& \alpha/2\notin\widetilde S,\\[6pt]
T_{\frac{\alpha}{2},\,t_\alpha^{-1/2}u_\alpha^{-1/2}}
\cup
T_{\frac{\alpha}{2},\,-t_\alpha^{-1/2}u_\alpha^{1/2}},
& \alpha/2\in\widetilde S.
\end{cases}
\]
We interpret vanishing on a union to mean vanishing on both components.
\end{enumerate}
\end{definition}

\begin{remark} Also we emphasize that conditions 1, 2 and 3 are parameterized by the underlying reduced root system, and the key distinction from the DAHA of type $C_n$ is the non-reducedness condition on the zeros appearing in condition 3. Furthermore, observe that for $\alpha/2\in\tilde{S}$, if we plug in $t_\alpha=t,u_\alpha=1$ into divisor $\tilde{T}_\alpha$, the condition becomes $f_w$ vanishes on $(e^{\alpha/2}-t^{-1/2})(e^{\alpha/2}+t^{-1/2})=e^\alpha-t^{-1}$, which is precisely $T_{\alpha,t^{-1}}$!
\end{remark}
Here is the main theorem of this section:
\begin{thm}\label{zeros residues GKV}
The $\mathbf{F}$-linear space $\mathbf{H}_{q,\underline{\kappa}}$, as defined in Definition \ref{zeros and residue of DAHA}, is an algebra which is isomorphic to the $(C_n^\vee,C_n)$-DAHA $H_{q,\underline{\kappa}}(CC_n^\vee)$, as defined in Definition \ref{generators and relations DAHA}.
\end{thm}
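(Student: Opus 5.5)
The plan follows the GKV strategy: show that the image of $H_{q,\underline{\kappa}}(CC_n^\vee)$ under Noumi's faithful representation, viewed inside $\mathbf F[T]_{\mathrm{loc}}\rtimes W$ via Stokman's embedding, coincides with $\mathbf H_{q,\underline{\kappa}}$.

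\textbf{Step 1: the generators lie in $\mathbf H_{q,\underline{\kappa}}$, and it is closed under multiplication.} By \cite[Theorem 3.2]{Sahi1999}, $H_{q,\underline{\kappa}}(CC_n^\vee)$ is the subalgebra generated by the $\beta_i=\pi(T_i)$ of \eqref{beta_i basis} together with the $X_j^{\pm1}$.

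First I verify that each $\beta_i$ satisfies conditions (1)--(3) of Definition \ref{zeros and residue of DAHA}. The only poles are simple poles along $T_{\alpha_i}$. The residues of $c_i(e^{\alpha_i})$ and $d_i(e^{\alpha_i})$ cancel, because at $z=1$ the numerators agree: $t_i^{1/2}(1-z)-c_i(z)(1-z)$ evaluated at $z=1$ gives $-(\text{numerator of }c_i)(1)$, which matches $d_i$. We have $D(s_i)=\{\alpha_i\}$. For $i=0,n$ the numerator of $c_i$ vanishes exactly on the two components of $\widetilde T_{\alpha_i}$, where $\alpha_i/2=\alpha_i^\vee\in\widetilde S$. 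For $1\le i\le n-1$ it vanishes on $T_{\alpha_i,t^{-1}}$. Here one must check the sign and normalization conventions: the factor $1-u^{1/2}t^{1/2}z^{1/2}$ vanishes at $e^{\alpha/2}=t^{-1/2}u^{-1/2}$, and the factor $1+u^{-1/2}t^{1/2}z^{1/2}$ vanishes at $e^{\alpha/2}=-t^{-1/2}u^{1/2}$, as required. Elements $f[1]$ with $f\in\mathbf F[T]$ trivially satisfy all three conditions.

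Next I prove that $\mathbf H_{q,\underline{\kappa}}$ is closed under multiplication. This reduces to closure under left multiplication by $[w]$ and by $\mathbf F[T]$, combined with the $W$-equivariance of the conditions:
\begin{itemize}
\item conditions (1) and (2) transform correctly under $w$, since $w$ permutes the divisors $T_\alpha$ and $s_{w\alpha}=ws_\alpha w^{-1}$;
\item condition (3) uses $D(s_iw)=s_iD(w)\cup\{\alpha_i\}$ or $s_i(D(w)\setminus\{\alpha_i\})$, together with the $W$-invariance of $t_\alpha,u_\alpha$ and of $\widetilde T_\alpha$.
\end{itemize}
More precisely, following \cite{GKV}, I show $\beta_i\cdot\mathbf H_{q,\underline{\kappa}}\subset\mathbf H_{q,\underline{\kappa}}$ directly. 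The coefficient of $[s_iw]$ in $\beta_i f$ is $c_i(e^{\alpha_i})\,{}^{s_i}\!f_{w}+d_i(e^{\alpha_i})f_{s_iw}$. Its poles along $T_{\alpha_i}$ cancel by the residue condition on $f$, and its zeros are inherited: on $\widetilde T_{\alpha_i}$ the first term has zeros from $c_i$, and the second term contributes zeros only when $\alpha_i\in D(s_iw)$. Poles along other $T_\beta$ are handled by transporting the residue pairing $(w,s_\beta w)$ under $s_i$. This shows the image of the DAHA lies in $\mathbf H_{q,\underline{\kappa}}$. The generators lie in $\mathbf H$, and $\mathbf H$ is a left module over them, so $\mathbf H$ contains the algebra they generate.

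\textbf{Step 2: the reverse inclusion, via a filtration by Bruhat length.} I induct on the Bruhat-maximal $w$ with $f_w\neq0$. For a reduced word $w=s_{i_1}\cdots s_{i_\ell}$, the element $\beta_{i_1}\cdots\beta_{i_\ell}$ has leading coefficient
\[
\prod_{\alpha\in D(w)} c_\alpha(e^\alpha)
\]
at $[w]$, with lower terms supported on $w'<w$. Given $f\in\mathbf H$ with leading term $f_w$, conditions (1)--(3) force the $w$-coefficient to have the following properties:
\begin{itemize}
\item it has no poles along $T_\alpha$ for $\alpha\in D(w)$, since a maximal $w$ pairs with $s_\alpha w<w$, so the residue must be cancelled differently;
\item it vanishes on each $\widetilde T_\alpha$ for $\alpha\in D(w)$.
\end{itemize}
Hence $f_w=g\cdot\prod_{\alpha\in D(w)}c_\alpha(e^\alpha)$ with $g$ having at most the allowed poles, and then $g\in\mathbf F[T]$. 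Subtracting $g\,\beta_{i_1}\cdots\beta_{i_\ell}$ lowers the support. Combined with the PBW spanning of $H_{q,\underline{\kappa}}(CC_n^\vee)$ by $X^\lambda T_w$, this gives equality. Along the way, $\mathbf H_{q,\underline{\kappa}}$ being an algebra follows from equality with a subalgebra.

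\textbf{Main obstacle.} The main obstacle is the claim in Step 2 that the leading coefficient $f_w$ is regular along the $T_\alpha$, $\alpha\in D(w)$, and divisible by the product of the numerators of the $c_\alpha$. This requires that these zero loci be distinct, which holds for generic parameters and integrally since they are coprime irreducibles in $\mathbf F[T]$. One must also argue that no other poles survive. Here I would localize at each divisor separately and use that $\mathbf F[T]$ is a UFD in which $1-e^\alpha$ and the numerators of the $c_\alpha$ are pairwise coprime. This includes the two non-reduced factors $e^{\alpha/2}-t^{-1/2}u^{-1/2}$ and $e^{\alpha/2}+t^{-1/2}u^{1/2}$, whose coprimality over $\mathbf F$ I would check explicitly.
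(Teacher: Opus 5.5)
You have a genuine gap in Step 2. The key divisibility step has the pole analysis backwards, so it does not go through as written.

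\textbf{The inverted claim.} You assert that Bruhat-maximality of $w$ makes $f_w$ regular along $T_\alpha$ for $\alpha\in D(w)$, and you repeat this as the ``main obstacle''. This is false.
\begin{itemize}
\item Already for $f=\beta_i$, the leading coefficient $f_{s_i}=c_i(e^{\alpha_i})$ has a simple pole along $T_{\alpha_i}$, and $\alpha_i\in D(s_i)$.
\item In general, $\theta_w=\prod_{\alpha\in D(w)}\theta_\alpha$ has poles exactly along these divisors.
\item For $\alpha\in D(w)$ one has $s_\alpha w<w$, so $f_{s_\alpha w}$ may be nonzero, and condition (2) says nothing about $f_w$ alone.
\end{itemize}

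\textbf{The fix.} The maximality argument must be applied to the complementary roots. If $\alpha\in S^+_{\mathrm{Real}}\setminus D(w)$, then $s_\alpha w>w$, so $f_{s_\alpha w}=0$, and condition (2) forces $\operatorname{Res}_{T_\alpha}f_w=0$. Only with this does $f_w/\theta_w$ land in $\mathbf F[T]$:
\begin{itemize}
\item the remaining simple poles along $T_\alpha$, $\alpha\in D(w)$, are absorbed by the denominators $1-e^\alpha$ of $\theta_\alpha$;
\item the zeros of the numerators of $\theta_\alpha$ are supplied by condition (3), which is where your coprimality remarks belong.
\end{itemize}
Your sentence ``with $g$ having at most the allowed poles, and then $g\in\mathbf F[T]$'' is unjustified as it stands. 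Regularity of $f_w$ along $T_\beta$ for $\beta\notin D(w)$ is never established, and that is the one place where the residue condition and the Bruhat order actually enter.

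\textbf{Comparison with the paper.} Once this is repaired, your route is a legitimate variant of the paper's.
\begin{itemize}
\item The paper first proves that $\mathbf H_{q,\underline{\kappa}}$ is closed under arbitrary products. It cites GKV Theorem 1.4 for conditions (1) and (2), and checks (3) on each summand $P_w\,{}^wQ_y$ via the dichotomy $w^{-1}\alpha\in S^\pm$. It then shows the $\beta_w$ form an $\mathbf F[T]$-basis and compares with the PBW basis.
\item You only prove $\beta_i\mathbf H_{q,\underline{\kappa}}\subset\mathbf H_{q,\underline{\kappa}}$. Since $1\in\mathbf H_{q,\underline{\kappa}}$, it is then a left module containing the DAHA, and you get the algebra structure for free from the final equality.
\item This avoids the general product statement. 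In exchange, you must carry out the double-pole and residue cancellation for $\beta_i f$ by hand.
\end{itemize}

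\textbf{Two smaller corrections.}
\begin{itemize}
\item Drop the opening reduction to ``left multiplication by $[w]$''. In fact $[s_i]\notin\mathbf H_{q,\underline{\kappa}}$, since its coefficient $1$ does not vanish on $\widetilde T_{\alpha_i}$, and $\mathbf H_{q,\underline{\kappa}}$ is not stable under it.
\item In the zero check for $\beta_i f$, your phrasing is off. The term $d_i(e^{\alpha_i})f_{s_iw}$ vanishes on every $\widetilde T_\gamma$, $\gamma\in D(s_iw)$, directly by condition (3) for $f$. The term $c_i(e^{\alpha_i})\,{}^{s_i}\!f_w$ handles $\gamma=\alpha_i$ via the zeros of $c_i$, and handles $\gamma\neq\alpha_i$ via $s_i\gamma\in D(w)$.
\end{itemize}
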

\begin{proof}
We divide the proof into three steps, following a similar outline as in the proof of \cite{GKV}.

\textbf{Step 1}. We show the subspace \(\mathbf{H}_{q,\underline{\kappa}}\) is an
\(\mathbf F\)-algebra. In \cite[Theorem 1.4]{GKV}, it is shown that
conditions (1) and (2) cut out an algebra, which \cite{GKV} denote by
\(\widetilde H\). The key ingredient in this proof is just that \(W\) is a
Coxeter group. To show that further imposing our condition (3) also cuts out
an algebra, we use similar reasoning.

Write
\[
P=\sum_{w\in W}P_w[w],
\qquad
Q=\sum_{w\in W}Q_w[w]
\]
with \(P,Q\in \mathbf{H}_{q,\underline{\kappa}}\). Then
\[
PQ=\sum_{u\in W}F_u[u],
\qquad
F_u=\sum_{wy=u}P_w\,{}^wQ_y.
\]
We show that each summand \(P_w\,{}^wQ_y\) vanishes on the divisor
\(\widetilde T_\alpha\) required by condition (3).

First observe that the divisors \(\widetilde T_\alpha\) are \(W\)-equivariant:
\[
w(\widetilde T_\beta)=\widetilde T_{w\beta}.
\]
Indeed, if \(\beta/2\notin \tilde{S}\), then
\(\widetilde T_\beta=T_{\beta,t^{-1}}\), and this follows immediately from
\(w(e^\beta)=e^{w\beta}\). If \(\beta/2\in \tilde{S}\), then \(\widetilde T_\beta\)
is the zero locus of
\[
(e^{\beta/2}-t_\beta^{-\frac12}u_\beta^{-\frac12})
(e^{\beta/2}+t_\beta^{-\frac12}u_\beta^{\frac12}).
\]
Applying \(w\) sends this to the corresponding expression with
\(e^{w\beta/2}\). Since \(t_\beta,u_\beta\) are constant on \(W\)-orbits, this
is precisely the divisor \(\widetilde T_{w\beta}\).

Now fix \(u=wy\), and suppose $\alpha\in D(u)$. Equivalently, \(y^{-1}w^{-1}\alpha\in S^-\). We must show that
\(P_w\,{}^wQ_y\) vanishes on \(\widetilde T_\alpha\). There are two cases.

If \(w^{-1}\alpha\in S^+\), then $w^{-1}\alpha\in D(y)$
so \(Q_y\) vanishes on \(\widetilde T_{w^{-1}\alpha}\). By the
\(W\)-equivariance of the divisors, \({}^wQ_y\) vanishes on
\(\widetilde T_\alpha\), and hence so does \(P_w\,{}^wQ_y\).

If \(w^{-1}\alpha\in S^-\), then $\alpha\in D(w)$,
so \(P_w\) vanishes on \(\widetilde T_\alpha\). Hence
\(P_w\,{}^wQ_y\) similarly vanishes on \(\widetilde T_\alpha\).

Thus each summand in \(F_u\) vanishes on the required divisor, so \(F_u\)
satisfies condition (3). Since conditions (1) and (2) are already preserved
under multiplication by \cite[Theorem 1.4]{GKV}, we conclude that
\(\mathbf{H}_{q,\underline{\kappa}}\) is an algebra.

\textbf{Step 2}. We show that \(\mathbf{H}_{q,\underline{\kappa}}\) has a left
\(\mathbf F[T]\)-basis given by the elements \(\beta_w\), \(w\in W\). Recall that
\[
\beta_i=c_i(e^{\alpha_i})[s_i]+d_i(e^{\alpha_i})[1],
\qquad 0\leq i\leq n.
\]
Since the \(\beta_i\)'s are the images of the generators \(T_i\) under Noumi's
representation, they satisfy the affine \(\widetilde C_n\) braid relations.
Thus, for a reduced expression \(w=s_{i_1}\cdots s_{i_r}\), the element
\begin{equation}\label{beta_w basis}
    \beta_w:=\beta_{i_1}\cdots\beta_{i_r}
\end{equation}
is well-defined. Next, observe \(\beta_i\in \mathbf{H}_{q,\underline{\kappa}}\). Indeed, we see
\begin{equation*}
    \text{Res}_{T_{\alpha_i}}(c_i(e^{\alpha_i})+d_i(e^{\alpha_i}))=\text{Res}_{T_{\alpha_i}}(t_i^{1/2})=0
\end{equation*}
since the scalar $t_i^{1/2}$ is regular. Moreover, \(c_i(e^{\alpha_i})\) has exactly the zeros required by condition (3), and both coefficients have no poles except possibly along \(T_{\alpha_i}\). Hence, by Step 1, $\beta_w\in \mathbf{H}_{q,\underline{\kappa}}$ for all \(w\in W\).

For \(\alpha\in S_{\mathrm{Real}}^+\), define
\[
\theta_\alpha:=
\begin{cases}
t^{-\frac12}\dfrac{1-te^\alpha}{1-e^\alpha},
& \alpha/2\notin \tilde{S},\\[10pt]
t_\alpha^{-\frac12}
\dfrac{
(1-u_\alpha^{\frac12}t_\alpha^{\frac12}e^{\alpha/2})
(1+u_\alpha^{-\frac12}t_\alpha^{\frac12}e^{\alpha/2})
}
{1-e^\alpha},
& \alpha/2\in \tilde{S}.
\end{cases}
\]
For \(w\in W\), set
\[
\theta_w:=\prod_{\alpha\in D(w)}\theta_\alpha.
\]
Define
$$(\mathbf{F}[T]_{\mathrm{loc}}\rtimes W)_{\leq w}:=\{\sum_{v\in W} f_v[v]\in\mathbf{F}[T]_{\mathrm{loc}}\rtimes W: f_v=0\text{ unless }v\leq w\}$$
\noindent and similarly define $(\mathbf{F}[T]_{\mathrm{loc}}\rtimes W)_{< w}$. Then we have (compare with \cite[Lemma 2.8]{GKV})
\[
\beta_w-\theta_w[w]\in(\mathbf{F}[T]_{\mathrm{loc}}\rtimes W)_{< w}.
\]
Indeed, if \(w=s_{i_1}\cdots s_{i_r}\) is reduced, the coefficient of \([w]\)
in \(\beta_w\) is
\[
c_{i_1}(e^{\alpha_{i_1}})
s_{i_1}\!\left(c_{i_2}(e^{\alpha_{i_2}})\right)
\cdots
s_{i_1}\cdots s_{i_{r-1}}\!\left(c_{i_r}(e^{\alpha_{i_r}})\right),
\]
and the corresponding roots are exactly the elements of \(D(w)\).

Now let
\[
f=\sum_{y\leq w}f_y[y]\in \mathbf{H}_{q,\underline{\kappa}}\cap(\mathbf{F}[T]_{\mathrm{loc}}\rtimes W)_{\leq w}.
\]
 We claim that $f_w\in \mathbf F[T]\theta_w.$
If \(\alpha\notin D(w)\), then \(s_\alpha w>w\), so \(f_{s_\alpha w}=0\). By the
residue condition, $\operatorname{Res}_{T_\alpha}(f_w)=0,$
and hence \(f_w\) has no pole along \(T_\alpha\). Thus the only possible poles of
\(f_w\) occur along \(T_\alpha\) with \(\alpha\in D(w)\). For such \(\alpha\), condition (3) says that \(f_w\) vanishes along
\(\widetilde T_\alpha\), which is exactly the zero divisor of \(\theta_\alpha\).
Thus the possible poles introduced by dividing by the numerator of
\(\theta_\alpha\) are canceled by the vanishing of \(f_w\). On the other hand,
the denominator \(1-e^\alpha\) of \(\theta_\alpha\) cancels the allowed simple
pole of \(f_w\) along \(T_\alpha\). Dividing by all \(\theta_\alpha\),
\(\alpha\in D(w)\), therefore leaves no poles, and hence
\[
f_w/\theta_w\in \mathbf F[T].
\]

Write \(f_w=a_w\theta_w\) with \(a_w\in\mathbf F[T]\). Using the lower-triangular
expansion of \(\beta_w\), we get
\[
f-a_w\beta_w\in \mathbf{H}_{q,\underline{\kappa},<w}.
\]
Induction on Bruhat order shows that every element of
\(\mathbf{H}_{q,\underline{\kappa}}\) lies in the \(\mathbf F[T]\)-span of the
\(\beta_w\)'s.

Finally, the same triangular expansion gives linear independence. If
\[
\sum_w a_w\beta_w=0
\]
is a finite relation, choose \(w\) maximal with \(a_w\neq0\). The \([w]\)-coefficient
is \(a_w\theta_w\), so \(a_w=0\), a contradiction. Therefore the \(\beta_w\)'s form
a left \(\mathbf F[T]\)-basis of \(\mathbf{H}_{q,\underline{\kappa}}\).

\textbf{Step 3}. It remains to identify \(\mathbf{H}_{q,\underline{\kappa}}\)
with \(H_{q,\underline{\kappa}}(CC_n^\vee)\). By Sahi's faithfulness theorem
\cite[Theorem 3.2]{Sahi1999}, we may identify
\(H_{q,\underline{\kappa}}(CC_n^\vee)\) with its image under the Noumi
polynomial representation. Equivalently, we regard
\(H_{q,\underline{\kappa}}(CC_n^\vee)\) as a subalgebra of
\(\mathbf{F}[T]_{\mathrm{loc}}\rtimes W\), where
\[
T_i\mapsto \beta_i
=
c_i(e^{\alpha_i})[s_i]+d_i(e^{\alpha_i})[1],
\qquad
X_j\mapsto X_j.
\]

\noindent The PBW theorem for the DAHA thus gives
\[
H_{q,\underline{\kappa}}(CC_n^\vee)
=
\bigoplus_{w\in W}\mathbf F[T]\beta_w
\]
as a left \(\mathbf F[T]\)-module. On the other hand, we showed in Step 2, $\mathbf{H}_{q,\underline{\kappa}}$ admits the same $\mathbf{F}[T]$-basis of $\beta_w,w\in W$. Thus the two algebras are the same subspace of \(\mathbf{F}[T]_{\mathrm{loc}}\rtimes W\), and hence
\[
\mathbf{H}_{q,\underline{\kappa}}
\simeq
H_{q,\underline{\kappa}}(CC_n^\vee).\qedhere
\]
\end{proof}

We end this section by commenting that we could have indexed the zeros and residues condition using the other underlying reduced root-subsystem, $S^\vee$, and consequently proven Theorem \ref{zeros residues GKV} using the Noumi generators $\beta_i^\vee$. These approaches are equivalent, and one may see this by noting $\alpha_0^\vee=\alpha_0/2$ and the dual Noumi generators have the same zeros and residues because
$$T_0^\vee=T_0^{-1}e^{-\alpha_0^\vee},\quad T_i^\vee=T_i,\quad T_n^\vee=e^{-\alpha_n^\vee}T_n^{-1}.$$

\subsection{Zeros and Residues via \cite{BaranovskyEvensGinzburgQuantumToriDAHA}}
Finally, let us explain how to convert the zeros-and-residues construction of Theorem~\ref{zeros residues GKV} into the one analogous to \cite{BaranovskyEvensGinzburgQuantumToriDAHA}. Thus, we replace the affine Weyl group $W$ by its finite Weyl group $W_0$ at the cost of replacing the commutative algebra $\mathbf{F}[T]_{\mathrm{loc}}$ by a non-commutative algebra of $q$-difference operators on $T$.

\begin{definition}
For \(\lambda=\sum_{i=1}^n m_i\epsilon_i\in\Lambda\), define the $q$-difference operator
\[
D_{q,\lambda}:=D_{q,\epsilon_1}^{m_1}\cdots D_{q,\epsilon_n}^{m_n},
\]
where
\[
D_{q,\epsilon_i}f(X_1,\dots,X_n)
=
f(X_1,\dots,qX_i,\dots,X_n).
\]
\end{definition}

Thus \(D_{q,\lambda}\) is identified with the affine translation \(\tau(\lambda)\) as defined in \ref{affine translation}, and 
\[
D_{q,\epsilon_i}X_j=q^{\delta_{ij}}X_jD_{q,\epsilon_i}.
\]
Let
\[
\mathcal D_q:=
\bigoplus_{\mu\in\Lambda}\mathbf{F}[T]_{\mathrm{loc}}D_{q,\mu}
\]
be the algebra of rational \(q\)-difference operators on $T$. We then have
\[
\mathcal D_q\rtimes W_0
=
\bigoplus_{w\in W_0,\ \mu\in\Lambda}
\mathbf{F}[T]_{\mathrm{loc}}D_{q,\mu}[w]\simeq\mathbf{F}[T]_{\mathrm{loc}}\rtimes W,
\]
and every element of $\mathcal{D}_q\rtimes W_0$ has a unique expansion \(D=\sum_{w\in W_0,\ \mu\in\Lambda}h_{w,\mu}D_{q,\mu}[w]\) for some  \(h_{w,\mu}\in\mathbf{F}[T]_{\mathrm{loc}}.\) Now let
\[
\alpha=\bar\alpha+k\delta\in S_{\mathrm{Real}}^+
\]
be an affine real root, where \(\bar\alpha\in R\) denotes its finite part. Since
\(e^\delta=q\), we have
\begin{equation}\label{divisor BEG}
    T_\alpha=\{e^\alpha=1\}
=
T_{\bar\alpha,q^{-k}}.
\end{equation}
Moreover, with our convention for affine translations, \(s_\alpha
=
\tau\!\left(-k\bar\alpha^\vee\right)s_{\bar\alpha}\).
Equivalently,
\begin{equation}\label{residues BEG}
s_\alpha D_{q,\mu}[w]
=
D_{q,\;s_{\bar\alpha}\mu-k\bar\alpha^\vee}
[s_{\bar\alpha}w].    
\end{equation}
Here is the main definition and theorem:

\begin{definition}\label{zeros and residue of DAHA BEG}
Consider the \(\mathbf F\)-linear subspace \(\mathcal H_{q,\underline{\kappa}}\) of $\mathcal D_q\rtimes W_0$
consisting of all elements $D=\sum_{w\in W_0,\ \mu\in\Lambda}h_{w,\mu}D_{q,\mu}[w]$ satisfying the following three conditions:

\noindent\textbf{1.}
Each coefficient \(h_{w,\mu}\) has at most first-order poles
along the divisors
$$
T_{\bar\alpha,q^{-k}},\;\text{
where } \alpha=\bar\alpha+k\delta\in S_{\mathrm{Real}}^+.
$$

\noindent\textbf{2.}
For every affine real root $\alpha=\bar\alpha+k\delta\in S_{\mathrm{Real}}^+,$
we have
\[
\operatorname{Res}_{T_{\bar\alpha,q^{-k}}}(h_{w,\mu})
+
\operatorname{Res}_{T_{\bar\alpha,q^{-k}}}
\left(
h_{s_{\bar\alpha}w,\;s_{\bar\alpha}\mu-k\bar\alpha^\vee}
\right)
=0.
\]

\medskip

\noindent\textbf{3.}
For \(\alpha\in R(C_n)^+\), \(w\in W_0\), and \(\mu\in\Lambda\), set
$m_\alpha:=\langle \alpha,\mu\rangle,\;\;
\epsilon_\alpha(w):=
\begin{cases}
1,& w^{-1}\alpha\in R(C_n)^-,\\
0,& w^{-1}\alpha\in R(C_n)^+.
\end{cases}
$

\smallskip

\noindent If \(\alpha=\epsilon_i\pm\epsilon_j\), \(i<j\) is a short root, then
\(h_{w,\mu}\) vanishes on \(T_{\alpha,p}\) for the following values of \(p\):
\[
\begin{aligned}
p&=q^{-r}t^{-1},
&\qquad m_\alpha&<0,
&\qquad 0\leq r&\leq -m_\alpha-1+\epsilon_\alpha(w),\\
p&=t^{-1},
&\qquad m_\alpha&=0,
&\qquad \epsilon_\alpha(w)&=1,\\
p&=q^r t,
&\qquad m_\alpha&>0,
&\qquad 1\leq r&\leq m_\alpha-\epsilon_\alpha(w).
\end{aligned}
\]

\noindent If \(\alpha=2\epsilon_i\) is a long root, then \(h_{w,\mu}\) vanishes on \(T_{\frac{\alpha}{2},p}:=\{e^{\frac{\alpha}{2}}=p\}\) for the following values of \(p\):
\[
\begin{aligned}
p&\in
\left\{
q^{-r}t_n^{-\frac12}u_n^{-\frac12},
-q^{-r}t_n^{-\frac12}u_n^{\frac12}
\right\},
& m_\alpha&<0,
& 0\leq r&\leq -\frac{m_\alpha}{2}-1+\epsilon_\alpha(w),\\
p&\in
\left\{
q^{-(r+\frac12)}t_0^{-\frac12}u_0^{-\frac12},
-q^{-(r+\frac12)}t_0^{-\frac12}u_0^{\frac12}
\right\},
& m_\alpha&<0,
& 0\leq r&\leq -\frac{m_\alpha}{2}-1,\\
p&\in
\left\{
t_n^{-\frac12}u_n^{-\frac12},
-t_n^{-\frac12}u_n^{\frac12}
\right\},
& m_\alpha&=0,
& \epsilon_\alpha(w)&=1,\\
p&\in
\left\{
q^r t_n^{\frac12}u_n^{\frac12},
-q^r t_n^{\frac12}u_n^{-\frac12}
\right\},
& m_\alpha&>0,
& 1\leq r&\leq \frac{m_\alpha}{2}-\epsilon_\alpha(w),\\
p&\in
\left\{
q^{r+\frac12}t_0^{\frac12}u_0^{\frac12},
-q^{r+\frac12}t_0^{\frac12}u_0^{-\frac12}
\right\},
& m_\alpha&>0,
& 0\leq r&\leq \frac{m_\alpha}{2}-1.
\end{aligned}
\]

\end{definition}

\begin{thm}\label{BEG zeros residues theorem}
    The subspace $\mathcal{H}_{q,\underline{\kappa}}$ of Definition \ref{zeros and residue of DAHA BEG} is an algebra isomorphic to $H_{q,\underline{\kappa}}(CC_n^\vee)$.
\end{thm}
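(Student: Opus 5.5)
The plan is to deduce Theorem~\ref{BEG zeros residues theorem} from Theorem~\ref{zeros residues GKV}. We transport the conditions of Definition~\ref{zeros and residue of DAHA} along the algebra isomorphism
\[
\Phi:\mathbf{F}[T]_{\mathrm{loc}}\rtimes W\xrightarrow{\sim}\mathcal D_q\rtimes W_0,\qquad f[\tau(\mu)w]\mapsto f\,D_{q,\mu}[w]\quad(\mu\in\Lambda,\ w\in W_0).
\]
We first check that $\Phi$ is an algebra isomorphism. This reduces to the facts that $\tau(\epsilon_i)$ acts on $\mathbf F[T]_{\mathrm{loc}}$ by the $q$-shift $D_{q,\epsilon_i}$, and that $W=\tau(\Lambda)\rtimes W_0$. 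Both are recorded in the first subsection, and the definition of $\mathcal D_q\rtimes W_0$ already asserts the identification. It then suffices to show that $\Phi(\mathbf H_{q,\underline{\kappa}})=\mathcal H_{q,\underline{\kappa}}$, i.e.\ that for $h_{w,\mu}:=f_{\tau(\mu)w}$, the element $\sum f_v[v]$ satisfies conditions (1)--(3) of Definition~\ref{zeros and residue of DAHA} if and only if $\sum h_{w,\mu}D_{q,\mu}[w]$ satisfies conditions 1--3 of Definition~\ref{zeros and residue of DAHA BEG}. The algebra structure and the isomorphism with $H_{q,\underline{\kappa}}(CC_n^\vee)$ then follow immediately.

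Conditions 1 and 2 are a direct translation. By \eqref{divisor BEG}, $T_\alpha=T_{\bar\alpha,q^{-k}}$ for $\alpha=\bar\alpha+k\delta$. By \eqref{residues BEG}, $s_\alpha\tau(\mu)w=\tau(s_{\bar\alpha}\mu-k\bar\alpha^\vee)s_{\bar\alpha}w$. Hence the residue pairing $f_v\leftrightarrow f_{s_\alpha v}$ becomes exactly the pairing $h_{w,\mu}\leftrightarrow h_{s_{\bar\alpha}w,\,s_{\bar\alpha}\mu-k\bar\alpha^\vee}$.

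The substantive step is condition 3. For $v=\tau(\mu)w$ we must compute $D(v)=S^+_{\mathrm{Real}}\cap v(S^-)$ explicitly, grouped by finite part $\bar\alpha\in R^+$ (the roots with finite part $-\bar\alpha$ are handled by symmetry, giving the $e^{\bar\alpha}$-divisors). A root $\gamma=\pm\bar\alpha+k\delta$ lies in $D(v)$ iff $v^{-1}\gamma\in S^-$. Since
\[
v^{-1}\gamma=w^{-1}(\pm\bar\alpha)+(k\pm\langle\bar\alpha,\mu\rangle\text{-shift})\delta,
\]
with the exact shift depending on the convention $\tau(\lambda)$ acting by $-\langle\cdot,\lambda\rangle\delta$, the set of admissible $k$ is an interval of length $|m_\alpha|$, adjusted by one according to $\epsilon_\alpha(w)$. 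This reproduces the standard inversion-set count $\ell(\tau(\mu)w)$. For each such $\gamma$, the divisor $\widetilde T_\gamma$ equals $T_{\bar\alpha,q^{-k}t^{-1}}$ (short case), or the union of two $T_{\bar\alpha/2,\pm\cdots}$ components (long case, where $\gamma/2\in\tilde S$). Writing $e^{\gamma}=q^k e^{\bar\alpha}$ and solving gives the values $p$ listed. In the long case, $\bar\alpha=2\epsilon_i$ and the roots $2\epsilon_i+k\delta$ split by the parity of $k$ into the $(t_n,u_n)$-orbit and the $(t_0,u_0)$-orbit, according to Table~\ref{table:root-orbit-parameters}. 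Since $m_\alpha=2\mu_i$ is even, this splitting produces the two families with $q^{-r}$ and $q^{-(r+\frac12)}$ and ranges bounded by $m_\alpha/2$. The correction $\epsilon_\alpha(w)$ appears only in the even-$k$ family, because the boundary root with $k=0$ is the only one whose membership depends on $w$.

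The main obstacle is this bookkeeping: pinning down the sign and translation conventions so that the ranges of $r$ in each case of Definition~\ref{zeros and residue of DAHA BEG} match exactly, including the $m_\alpha=0$ cases and the half-integer $u_0$ shifts coming from $e^{\alpha_0^\vee}=q^{1/2}X_1^{-1}$. I would first test the computation on $n=1$ with small $\mu$, comparing against the explicit $\beta_0,\beta_1$ coefficients, and then argue orbit by orbit in general. Once the two conditions are shown to agree term by term, $\Phi$ restricts to an isomorphism $\mathbf H_{q,\underline{\kappa}}\cong\mathcal H_{q,\underline{\kappa}}$, and Theorem~\ref{zeros residues GKV} finishes the proof.
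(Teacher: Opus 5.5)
Your proposal is correct and follows the paper's route: identify $\mathbf F[T]_{\mathrm{loc}}\rtimes W$ with $\mathcal D_q\rtimes W_0$, match conditions 1 and 2 via \eqref{divisor BEG} and \eqref{residues BEG}, and match condition 3 using $(\tau(\mu)w)^{-1}\gamma=w^{-1}\bar\gamma+(k+\langle\bar\gamma,\mu\rangle)\delta$, which is the convention the paper fixes, together with splitting the long roots $2\epsilon_i+k\delta$ by the parity of $k$ into the $(t_n,u_n)$ and $(t_0,u_0)$ orbits. The paper cites Baranovsky--Evens--Ginzburg for the short roots rather than redoing them, and one small correction to your remark on $\epsilon_\alpha(w)$: the root whose membership depends on $w$ is the one with $k=|m_\alpha|$ (zero $\delta$-coefficient after applying $(\tau(\mu)w)^{-1}$), not $k=0$; it lies in the even family because $m_\alpha=2\mu_i$ is even.
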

\begin{proof}
We just need to show that conditions \(1\), \(2\), and \(3\) of Definition~\ref{zeros and residue of DAHA BEG} are equivalent to conditions
\(1\), \(2\), and \(3\), respectively, of
Definition~\ref{zeros and residue of DAHA}.

Equation~\eqref{divisor BEG} identifies the divisors appearing in condition
\(1\) of the two definitions, while Equation~\eqref{residues BEG} identifies
the pairs of coefficients appearing in condition \(2\). Thus conditions
\(1\) and \(2\) of Definition~\ref{zeros and residue of DAHA BEG} are
equivalent to conditions \(1\) and \(2\), respectively, of
Definition~\ref{zeros and residue of DAHA}. It remains to prove the
equivalence of condition \(3\).

In the short root case, namely $\alpha=\ep_i\pm \ep_j$, our condition is precisely that of \cite{BaranovskyEvensGinzburgQuantumToriDAHA}\footnote{with the power of $q$ renormalized}, hence that case is already done.

We now check condition \textup{(3)} of GKV in the case $\alpha=2\ep_i$. For an affine root $\gamma=\alpha+k\delta$, we have
\[
(\tau(\mu)w)^{-1}\gamma
=
w^{-1}\alpha+\bigl(k+\langle \alpha,\mu\rangle\bigr)\delta.
\]
We have 2 cases: $k=2r$ is even or $k=2r+1$ is odd. Suppose further that $r\geq 0$ in either case; the other cases are handled similarly. 

For the first case, $(\tau(\mu)w)^{-1}\gamma=w^{-1}\alpha+(2r+m_\alpha)\delta,$ so $(\tau(\mu)w)^{-1}\gamma<0$ precisely when $m_\alpha<0$ and $0\leq r\leq -\frac{m_\alpha}{2}-1+\epsilon_\alpha(w).$
Since \(e^{\gamma/2}=q^r e^{\alpha/2}\), the corresponding GKV zero divisors are given by
$$
q^r e^{\alpha/2}=t_n^{-\frac12}u_n^{-\frac12}
\qquad\text{or}\qquad
q^r e^{\alpha/2}=-t_n^{-\frac12}u_n^{\frac12}.
$$
This is the zero locus in the first line of BEG condition 3.

For the second case, $(\tau(\mu)w)^{-1}\gamma=w^{-1}\alpha+(2r+1+m_\alpha)\delta.$ Since $m_\alpha$ is even, this is negative
precisely when $0\leq r\leq -\frac{m_\alpha}{2}-1.$ Since $e^{\gamma/2}=q^{r+\frac12}e^{\alpha/2}$, the GKV zero divisors are
$$
q^{r+\frac12}e^{\alpha/2}=t_0^{-\frac12}u_0^{-\frac12}
\qquad\text{or}\qquad
q^{r+\frac12}e^{\alpha/2}=-t_0^{-\frac12}u_0^{\frac12}.
$$
This is the zero locus in the second line of BEG condition 3. The cases of $m_\alpha=0,m_\alpha>0$ are treated similarly and this completes the proof.
\end{proof}
\noindent Finally, we note that one may obtain a zeros and residues construction of the spherical subalgebra by further enforcing similar constraints as in \cite[Theorem 8.1]{BaranovskyEvensGinzburgQuantumToriDAHA}.
\section{Deformation theory of the $CC_n^\vee$-DAHA}

In this section, we establish that the DAHA
$H_{q,\underline{\kappa}}(CC_n^\vee)$ is the formal universal deformation of a quantum torus algebra crossed product with a finite Weyl group. For $n=1$, the same argument, with the parameter $t$ omitted, gives a new proof of Oblomkov's universality theorem \cite{Oblomkov2004}, using the Hochschild cohomology computation of Etingof--Oblomkov \cite{EO}. For convenience, however, throughout this section we assume $n>1$, so that the deformation has six parameters. We first recall some well-known definitions and results in deformation theory; see, for example, \cite{Schedler2012} for an exposition. Throughout, $A$ will denote an associative $\Cb$-algebra.
\begin{definition}
    An infinitesimal deformation of $A$ is a $\Cb[\ep]/\ep^2$-module $A[\ep]/\ep^2:=A\otimes_{\mathbf{C}}\mathbf{C}[\ep]/\ep^2$, equipped with a product, $\star$, making $A[\ep]/\ep^2$ an associative algebra and such that 
    $a\star b=ab\mod\ep$. We denote by $(A[\ep]/\ep^2,\star)$ the infinitesimal deformation of $A$.
\end{definition}
\begin{definition}
    A 1-parameter formal deformation of $A$ is an associative algebra $A_{\hbar}:=A\hat\otimes_{\mathbf{C}}\mathbf{C}[[\hbar]]$ equipped with a $\Cb[[\hbar]]$-bilinear product $\star$ such that $a\star b=ab\mod\hbar$.
\end{definition}
More generally, suppose $(R,\mathfrak{m})$ is a complete augmented $\mathbf{C}$-algebra with augmentation ideal $\mathfrak{m}$. Denote the completed tensor product
$$A\hat{\otimes}R:=\lim_{n\rightarrow\infty}A\otimes R/\mathfrak{m}^n.$$

\begin{definition}
    A formal deformation of $A$ over a complete augmented $\Cb$-algebra $R$ with maximal ideal $\mathfrak{m}\subset R$ is an $R$-algebra $A'$ isomorphic to $A\hat{\otimes}R$ as $R$-modules such that 
    $$A'\otimes_R (R/\mathfrak{m})\simeq A\;\text{ as $\Cb$-algebras}.$$
\end{definition}

Given a continuous homomorphism $p:R\rightarrow R'$ and a formal deformation $(A\hat{\otimes}R,\star)$, we may base change using $p$ to obtain a new formal deformation $(A\hat{\otimes}R',p(\star))$ with product
$$ap(\star)b:=(\text{Id}\otimes p)(a\star b)$$
\begin{definition}
    A universal formal deformation $(A\hat{\otimes}R,\star)$ is a formal deformation such that for every other formal deformation $(A\hat{\otimes}R',\star')$, there exists a unique continuous homomorphism $p:R\rightarrow R'$ such that $(A\hat{\otimes}R',\star')$ is equivalent to the base-change deformation $(A\hat{\otimes}R',p(\star))$. If we only require existence, and not uniqueness, of a homomorphism $p:R\rightarrow R'$, then we say the deformation is \textit{versal}.
\end{definition}

Let $HH^\bullet(A):=HH^\bullet(A,A)$ denote the Hochschild cohomology of $A$. Given a vector space $V$, let $\hat{\mathcal{O}}[V]$ denote the completed polynomial algebra in $\dim(V)$ many variables. The following theorem is well-known:

\begin{thm}\label{H^1 and H^3=0}If $HH^3(A,A)=0$, then there exists a versal deformation of $A$ over $R:=\hat{\mathcal{O}}[HH^2(A)]$. If $HH^1(A)=0$, this deformation is universal
\end{thm}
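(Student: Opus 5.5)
This is the standard Schlessinger / Maurer--Cartan argument, which I would organize in three steps.

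\textbf{Step 1: the tangent space.} Identify first-order deformations of $A$ up to equivalence with $HH^2(A)$. A product $a\star b=ab+\ep\mu_1(a,b)$ is associative modulo $\ep^2$ exactly when $\mu_1$ is a Hochschild $2$-cocycle. Two such products are equivalent via $a\mapsto a+\ep\phi(a)$ exactly when the cocycles differ by the coboundary $d\phi$.

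\textbf{Step 2: construct the versal family order by order.} Fix a basis of $HH^2(A)$ and cocycle representatives $\mu^{(1)},\dots,\mu^{(m)}$. Let $x_1,\dots,x_m$ be the dual coordinates, so that $R=\Cb[[x_1,\dots,x_m]]$, and start with
\[
\star_1=\mu_0+\sum_i x_i\mu^{(i)} \pmod{\mathfrak m^2},
\]
where $\mu_0$ is the original product. Inductively, suppose $\star_N$ is associative modulo $\mathfrak m^{N+1}$. For any lift, the associator modulo $\mathfrak m^{N+2}$ is a $3$-cochain with coefficients in $\mathfrak m^{N+1}/\mathfrak m^{N+2}$. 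By the usual Gerstenhaber computation it is a Hochschild $3$-cocycle, so it defines an obstruction class in
\[
HH^3(A)\otimes \mathfrak m^{N+1}/\mathfrak m^{N+2}.
\]
This class vanishes by hypothesis. Choose $\mu_{N+1}$ with $d\mu_{N+1}$ equal to the associator and correct the lift by it. Passing to the inverse limit gives an associative $R$-bilinear product on $A\hat\otimes R$ whose Kodaira--Spencer map $(\mathfrak m/\mathfrak m^2)^*\to HH^2(A)$ is the identity. In Schlessinger's terms, the hull of the deformation functor is smooth because the obstruction space vanishes.

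\textbf{Step 3: versality.} Given any formal deformation $(A\hat\otimes R',\star')$, construct $p\colon R\to R'$ and an equivalence inductively modulo powers of $\mathfrak m'$. At first order, define $p(x_i)$ modulo $\mathfrak m'^2$ by the Kodaira--Spencer class of $\star'$, which yields an equivalence modulo $\mathfrak m'^2$. At each later stage, suppose $p(\star)$ and $\star'$ agree, after an equivalence, modulo $\mathfrak m'^{N+1}$. Their difference at order $N+1$ is a $2$-cocycle with values in $\mathfrak m'^{N+1}/\mathfrak m'^{N+2}$. Its cohomology class is absorbed by modifying $p(x_i)$ at that order; this uses that the $\mu^{(i)}$ span $HH^2$. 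The remaining coboundary is absorbed by a gauge transformation $\mathrm{id}+\phi$. Completeness of $R'$ then gives a continuous $p$ and an equivalence.

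\textbf{Step 4: uniqueness when $HH^1(A)=0$.} Two choices of $p$ that agree modulo $\mathfrak m'^{N+1}$ differ at the next order by an element of $HH^2(A)\otimes\mathfrak m'^{N+1}/\mathfrak m'^{N+2}$. Equivalent deformations have the same class, but the equivalence can be modified by automorphisms of the deformation. The infinitesimal ambiguity in such automorphisms is a derivation, i.e.\ $HH^1(A)$-valued data. When $HH^1(A)=0$, every infinitesimal automorphism is inner, and inner automorphisms do not change the induced map on $HH^2$. So equivalences are rigid enough that the class comparison forces the two choices of $p$ to agree at each order, and by induction $p$ is unique.

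\textbf{Main obstacle.} The delicate step is Step 4. One must check that the automorphism group of a deformation acts trivially on the parameter space. This is exactly where $HH^1=0$ enters, through the standard fact that the deformation functor is then pro-representable and not merely has a hull. I would cite Schlessinger's criterion (its condition (H4)) instead of redoing this. Since the result is classical, the paper's proof can be a reference to \cite{Schedler2012} together with the sketch above.
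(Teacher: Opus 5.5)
Your proposal is correct and is the standard Maurer--Cartan/Schlessinger argument; the paper gives no proof of this statement, only quoting it as well known with Schedler's exposition as reference and remarking that $HH^2$, $HH^3$, $HH^1$ control deformations, obstructions and infinitesimal automorphisms, which is exactly your outline. In Step 4, the operative fact is not that inner automorphisms act trivially on $HH^2$. It is that, when $HH^1(A)=0$, every automorphism of a deformation that is the identity modulo $\mathfrak m'$ is inner (its leading term is a derivation, hence inner, and one inducts on the order), and inner automorphisms lift along small extensions; this lets you normalize an equivalence between the base changes along $p_1,p_2$ agreeing modulo $\mathfrak m'^{N+1}$ to be the identity modulo $\mathfrak m'^{N+1}$, so the order-$(N+1)$ difference is a coboundary and $p_1\equiv p_2$ modulo $\mathfrak m'^{N+2}$, which is precisely Schlessinger's condition (H4).
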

\noindent We recall that the 2nd degree Hochschild cohomology controls infinitesimal deformations, and the third degree controls obstructions to lifting the deformation. Furthermore, the 1st degree Hochschild cohomology controls infinitesimal automorphisms of the deformation.

Following the notation introduced in Section 2, we remind the affine root system $\tilde{S}$ is of type $CC_n^\vee$, $S$ of type $C_n$, and $W_0<W:=W_{\text{aff}}(C_n)$ is the corresponding finite, respectively, affine Weyl group. Let ${H}_{q,\underline{\kappa}}(CC_n^\vee)$ denote the corresponding DAHA. 

Now, define the complete ground ring $R$ with maximal ideal $\mathfrak{m}$ by 
\begin{equation}\label{ground ring R}
    R:=\Cb[[\hbar,t_0',u_0',t_n',u_n',t']],\; \mathfrak{m}=(\hbar,t_0',u_0',t_n',u_n',t').
\end{equation}

Recall the ground ring $\mathbf{F}$ in Equation \ref{ground ring}. Consider the completed DAHA 
$$\hat{H}_{q,\underline{\kappa}}(CC_n^\vee):=H_{q,\underline{\kappa}}(CC_n^\vee)\hat\otimes_{\mathbf{F}_{\Cb}} R$$
whose parameters are 
\begin{equation}\label{formal parameters}
(q,\underline{\kappa}):=(q,t_0,u_0,t_n,u_n,t)=(q_0e^{\hbar},e^{t_0'},e^{u_0'},e^{t_n'}, e^{u_n'},e^{t'}).
\end{equation}
Here, the exponentials $e^u\in\Cb[[u]]$ are viewed formally, so each parameter lives in $R$. Also, the map $\mathbf{F}\rightarrow R$ is determined by $q^{\frac{1}{2}}\mapsto q_0^{\frac{1}{2}}e^{\frac{\hbar}{2}}$ and similarly for the remaining parameters.

\begin{definition}Define the quantum torus as the algebra
\begin{align*}B_{{q}}[n]:= \mathbf{C}\langle X_1^{\pm1},Y_1^{\pm1},\dots, X_n^{\pm1},Y_n^{\pm1}\rangle/ \big(&Y_iX_j=q^{\delta_{ij}}X_jY_i,\; X_i X_j=X_j X_i, Y_i Y_j = Y_j Y_i)
\end{align*}
\end{definition}
\noindent When the context is clear, we will omit $n$ from the notation and write $B_q=B_q[n]$. 

Denote the algebra $A$ the result of specializing $(q,\underline{\kappa})=(q_0,\underline{1})$. Thus we find 
\begin{equation}\label{undeformed algebra}
    A:= \hat{H}_{q,\underline{\kappa}}(CC_n^\vee)/(\mathfrak{m})\simeq {H}_{q_0,\underline{1}}(CC_n^\vee)=\mathbf{F}[T]\rtimes W \simeq B_{q_0}[n]\rtimes W_0.
\end{equation}
\noindent Finally, define the completion
$$\hat{A}:=A\hat{\otimes}R.$$

Let us now discuss the Hochschild cohomology of $A$. For $w\in W_0$, let $B_q.w:=\{aw:a\in B_q\}$ be the $B_q\otimes B_q$-module defined by 
$$(a\otimes b).(cw):= acwb = ac({}^wb)w$$
for $a,b,c\in B_q$. Next, we use a standard decomposition (see e.g. \cite{SheplerWitherspoonLie},\cite{Stefan1995}) for the Hochschild cohomology for skew group algebras to write
\begin{align}\label{isotypic decomposition HH}
    HH^\bullet(B_{q_0}[n]\rtimes W_0)&\simeq HH^\bullet(B_{q_0}[n],B_{q_0}[n]\rtimes W_0)^{W_0}\\
    &\simeq    \bigg( \bigoplus_{w\in  W_0}HH^\bullet(B_{q_0}[n],B_{q_0}[n].w)\bigg)^{W_0}\\
    &\simeq \bigoplus_{[w]\in \mathcal{C}}HH^\bullet(B_{q_0}[n],B_{q_0}[n].w)^{Z(w)}
\end{align}
\noindent where $Z(w)\subset W_0$ is the centralizer of $w$ and $\mathcal{C}$ denotes the set of conjugacy classes in $W_0$.

\noindent We denote by $\text{pr}_{[w]}$ the projection to the $[w]$-conjugacy class
\begin{equation}\label{conjugacy class projection}
    \text{pr}_{[w]}:HH^\bullet(B_{q_0}[n]\rtimes W(C_n))\rightarrow HH^\bullet(B_{q_0}[n],B_{q_0}[n]w)^{Z(w)}
\end{equation}
\noindent Similarly, given $a=\sum_{w\in W_0}a_w[w]\in A=\oplus_{w\in W_0}B_qw$, denote the corresponding projection map by
\begin{equation}\label{pr_w}
\text{pr}_w:A\rightarrow B_qw,\;\; a\mapsto a_w.
\end{equation}

Thus, to compute $HH^\bullet(A)$, we are reduced to computing equivariant cohomology of the quantum torus algebra $B_q$ with coefficients valued in the $B_q$-bimodule $B_q.w$ for $w\in W_0$. In low rank, this is easily computable using the Koszul complex which we now recall.

\subsection{Quantum Koszul resolution}\label{Koszul resolution}
Denote \(Z_1,\dots,Z_{2n}\) to be the generators
\(X_1,\dots,X_n,Y_1,\dots,Y_n\) of \(B_q\). Define the vector space
\[
V:=\operatorname{span}\{e_1,\dots,e_{2n}\},
\]
where the \(e_i\)'s are a basis corresponding to generators
\(Z_i\). Let $\bigwedge_q V$ denote the quantum exterior algebra. Recall since $W_0$ is finite, 
$$HH^\bullet(A,A) = HH^\bullet(B_q, A)^{W_0},$$
so we now consider a Koszul resolution of $B_q$. Define
\[
K_d(B_q):=B_q\otimes\bigwedge_q^dV\otimes B_q.
\]
Its differential $d_m:K_m(B_q)\rightarrow K_{m-1}(B_q)$ is given by
\begin{align}
&d_m\left(1\otimes e_{j_1}\wedge\cdots\wedge e_{j_m}\otimes1\right)\notag=
\sum_{r=1}^m(-1)^{r+1}
\left[
\left(\prod_{s=1}^r q_{j_sj_r}\right)
Z_{j_r}\otimes
e_{j_1}\wedge\cdots\wedge\widehat{e_{j_r}}\wedge\cdots\wedge e_{j_m}
\otimes1\right.\notag\\
&\hspace{55mm}\left.
-
\left(\prod_{s=r}^m q_{j_rj_s}\right)
1\otimes
e_{j_1}\wedge\cdots\wedge\widehat{e_{j_r}}\wedge\cdots\wedge e_{j_m}
\otimes Z_{j_r}
\right],
\label{Koszul differential}
\end{align}
where \(j_1<\cdots<j_m\) and the scalars \(q_{ij}\) are determined by the quantum torus relations $B_q$: $Z_iZ_j=q_{ij}Z_jZ_i.$ Thus, for any \(B_q\)-bimodule \(M\), $$HH^i(B_q,M)
\simeq
H^i\left(\operatorname{Hom}_{B_q^e}(K_\bullet(B_q),M)\right).$$

We may in fact directly compare the Koszul complex with the Hochschild complex by choosing a comparison map (see \cite[Section 4]{WitherspoonZhouGerstenhaber})
\[
\Psi_\bullet:\operatorname{Bar}_\bullet(B_q)\longrightarrow K_\bullet(B_q)
\]
lifting the identity on \(B_q\). We may choose \(\Psi_1\) and \(\Psi_2\) satisfying
\begin{align*}
\Psi_1(1\otimes Z_i\otimes1)
&=
1\otimes e_i\otimes1,
\qquad
\Psi_1(1\otimes Z_i^{-1}\otimes1)
=
-Z_i^{-1}\otimes e_i\otimes Z_i^{-1}.\\
d_K\Psi_2&=\Psi_1d_{\mathrm{Bar}}.
\end{align*}

A Koszul $2$-cocycle with values in $M$ may be identified with a linear map
\[
\gamma:\wedge_q^2V\longrightarrow M,
\]
such that $\gamma\circ d_3=0$. This determines a Hochschild \(2\)-cocycle via
\[
\widetilde\gamma:=\gamma\circ\Psi_2.
\]

The Weyl group \(W_0\) does not in general act linearly on \(V\), since an
element of \(W_0\) may send a generator \(Z_i\) to \(Z_j^{-1}\), whereas
\(V\) contains only the formal symbols \(e_1,\dots,e_{2n}\). Nevertheless,
the comparison map transports the \(W_0\)-action on the bar complex to the
Koszul complex. We call a Koszul \(2\)-cocycle, \(\gamma\), \(W_0\)-equivariant if the
corresponding Hochschild cocycle
\[
\widetilde\gamma:=\gamma\circ\Psi_2
\]
is $W_0$-equivariant. Namely, for every $w\in W_0$ and  $a,b\in B_{q},$
\[
\gamma\!\left(\Psi_2(1\otimes w(a)\otimes w(b)\otimes1)\right)
=
w\,\gamma\!\left(\Psi_2(1\otimes a\otimes b\otimes1)\right)w^{-1}.
\]
For example, if \(w(X_i)=X_i^{-1}\) and \(w(X_j)=X_j\), and $i<j\leq n$, then $W_0$-equivariance on $\gamma$ becomes
\[
w\,\gamma(e_i\wedge e_j)\,w^{-1}
=
-X_i^{-1}\gamma(e_i\wedge e_j)X_i^{-1}.
\]

\subsection{Proof of main theorem}
The entirety of this section is devoted to proving the following main theorem.
\begin{thm}\label{DAHA universal deformation}
Suppose $q=q_0e^\hbar$ and $q_0$ is not a root of unity. Then the DAHA $\hat{H}_{q,\underline{\mathbf{\kappa}}}(CC_n^\vee)$ is the universal formal deformation of $A=B_{q_0}[n]\rtimes W_0$ over $R$ as in \ref{ground ring R}.
\end{thm}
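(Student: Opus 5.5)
The strategy is to apply Theorem~\ref{H^1 and H^3=0} and then identify the DAHA with the versal family through its Kodaira--Spencer map.

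First, I use the Etingof--Oblomkov computation \cite{EO} of $HH^\bullet(B_{q_0}[n]\rtimes W_0)$, valid for $q_0$ not a root of unity. Via the decomposition \eqref{isotypic decomposition HH}, it gives $HH^1(A)=0$, $HH^3(A)=0$ (all odd groups vanish), and $\dim HH^2(A)=6$. The six classes are as follows. One is the class in the identity component $HH^2(B_{q_0},B_{q_0})^{W_0}$, spanned by the $W_0$-invariant symplectic form $\sum_i X_iY_i$-type cocycle, which deforms $q$. The other five lie in twisted sectors $[w]$ whose fixed locus has codimension two. These are the class of a transposition $s_{\epsilon_i-\epsilon_j}$, giving one class, and the two classes of reflections $s_{2\epsilon_i}$, whose fixed loci on the torus $T^\vee\times T$ have four components, two orbits of which are $W_0$-inequivalent, giving four classes. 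By Theorem~\ref{H^1 and H^3=0}, a universal deformation exists over $\hat{\mathcal O}[HH^2(A)]\simeq R$. Hence it suffices to show that $\hat H_{q,\underline\kappa}(CC_n^\vee)$ is a flat formal deformation of $A$ over $R$ whose Kodaira--Spencer map $T_0\operatorname{Spec}R\to HH^2(A)$ is an isomorphism. A standard argument then shows that the classifying map $R_{\mathrm{univ}}\to R$ is an isomorphism of complete local rings, since it is an isomorphism on cotangent spaces.

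Second, I need flatness and an explicit star product. Using the PBW basis $\{X^\lambda\beta_w\}$ from Step~2 of Theorem~\ref{zeros residues GKV}, equivalently $X^\lambda Y^\mu T_{w_0}$ for $w_0\in W_0$, I identify $\hat H$ with $A\hat\otimes R$ as $R$-modules by sending the PBW monomials to the corresponding monomials $X^\lambda\tau(\mu)w_0$ at the base point. Transporting the product gives $a\star b=ab+\sum_{p}p'\,\mu_p(a,b)+O(\mathfrak m^2)$, where $p'$ runs over $\hbar,t',t_0',t_n',u_0',u_n'$. Each $\mu_p$ is a Hochschild $2$-cocycle, and I compute it on generators by differentiating Noumi's operators $\beta_i$ in the parameters at $(q_0,\underline 1)$. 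For example, $\partial_{t'}\beta_i=\tfrac12\bigl(\tfrac{1+X_i/X_{i+1}}{1-X_i/X_{i+1}}\bigr)(1-s_i)$-type terms. Then I pull back to Koszul $2$-cochains via $\Psi_2$.

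Third, I show that the six classes $[\mu_p]$ are linearly independent in $HH^2(A)$. Here I use the projections $\mathrm{pr}_{[w]}$ of \eqref{conjugacy class projection}. $[\mu_\hbar]$ projects nontrivially to the identity sector; this is the $q$-deformation of the quantum torus, detected by the Koszul cocycle $e_{X_i}\wedge e_{Y_i}\mapsto X_iY_i$. The classes $\mu_{t'}$, $\mu_{t_n'},\mu_{u_n'}$ and $\mu_{t_0'},\mu_{u_0'}$ have identity-sector components that I expect to be coboundaries, while their components in the reflection sectors are supported on $s_{\epsilon_i-\epsilon_j}$ and $s_{2\epsilon_i}$ respectively. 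For these I evaluate the sector classes by localizing at the fixed components. In a twisted sector $[s]$, $HH^2(B_q,B_q s)$ is computed by the Koszul complex as functions on the fixed locus $\{e^{\alpha}=\pm1\}$ or $\{e^{\alpha}=\pm q^{1/2}\}$, modulo the image of $d$. The residues of $\partial\beta_i$ at $X_n=\pm1$ and $X_1=\pm q^{1/2}$ give evaluations that separate $t_n$ from $u_n$ via the sign $\pm$, because $\partial_{t_n'}$ and $\partial_{u_n'}$ produce numerators $1\pm X_n$ with different parity. Similarly they separate $t_0$ from $u_0$. The resulting $6\times 6$ matrix is block triangular with invertible $2\times2$ blocks, so it is invertible.

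The main obstacle is the third step: making the twisted-sector classes computable and proving they are non-zero modulo coboundaries, especially separating the four components of the long-root reflection. I would first reduce to rank one by restricting to the subalgebra generated by $X_n,Y_n,T_n$, which commutes appropriately with the rest. There I can compare with Oblomkov's $n=1$ computation, and the $t$-class can be handled by the analogous restriction to $s_{n-1}$.
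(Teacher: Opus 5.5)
Your proposal is correct and has the paper's skeleton. It uses the Etingof--Oblomkov computation ($HH^1(A)=HH^3(A)=0$, $\dim HH^2(A)=6$) and Theorem~\ref{H^1 and H^3=0}. It uses the PBW basis $\beta_w$ to present $\hat H_{q,\underline\kappa}(CC_n^\vee)$ as a flat star product on $\hat A$. It organizes the Kodaira--Spencer computation by the projections $\mathrm{pr}_{[w]}$, reduced to rank one for long reflections and rank two for short ones. Your cotangent-space argument for $R_{\mathrm{univ}}\to R$ is a useful explicit version of the paper's last sentence.

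The real difference is how you certify that the twisted-sector classes are nonzero and independent. The paper computes the rank-one cocycle outright, $\mu_\beta(Y,X)=q_0^{1/2}u_0'[s]+t_0'[sX]+q_0u_1'[sY]+q_0t_1'[sXY]$. It then reduces modulo $\operatorname{Im}d_2^*$ to the monomial basis $[s],[sX],[sY],[sXY]$, in which the $4\times 4$ block is diagonal. For the $s_1$-sector it writes down a functional $\Lambda$ that kills coboundaries and takes the value $1$ on $X_2Y_2$.

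You instead localize at the four $2$-torsion fixed points $X=\pm1,\pm q_0^{1/2}$ and read off $t'\pm u'$ from residues. This is a Hadamard-type change of basis of the same $\mathbf{C}^4$. Heuristically, the $s_n$-part $q_0u_1'[sY]+q_0t_1'[sXY]$ divided by the volume element $XY$ is $q_0(t_1'+u_1'X^{-1})$, whose values at $X=\pm1$ are exactly your combinations. Your picture explains why the parameters pair up and matches the orbifold count in \cite{EO}. The paper's route is mechanical and needs no localization.

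To make your version a proof, you must replace ``functions on the fixed locus'', which is literally true only in the commutative limit, with explicit $q_0$-twisted functionals on $B_{q_0}s$ that vanish on $\operatorname{Im}d_2^*$, in the spirit of $\Lambda$. Alternatively, fall back on the rank-one computation you mention.

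There are two smaller corrections.

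First, all $s_{2\epsilon_i}$ are conjugate in $W_0$, and $s_0$ has finite part $s_{2\epsilon_1}$. So all four long-root parameters land in the single sector $[P_1]$. They must be separated inside $HH^2(B_{q_0}[1],B_{q_0}[1]P_1)^{W_0(C_1)}\cong\mathbf{C}^4$, not by $\mathrm{pr}_{[w]}$.

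Second, block triangularity does not need the identity-sector components of the other five classes to be coboundaries. Two facts suffice:
\begin{enumerate}
\item The $\hbar$-class has no twisted component. At $\underline\kappa=\underline 1$ one has $\beta_w=[w]$, so only the $W$-action on $\mathbf{F}[T]$ is deformed.
\item To first order, $b_w$ is supported on $[w]$ and on $[w']$, where $w'$ is obtained by deleting one deformed simple reflection from a reduced word. Hence deforming $t_0,u_0,t_n,u_n$ produces no short-reflection component, and deforming $t$ produces no long-reflection component.
\end{enumerate}
The paper's concluding ``distinct summands'' step tacitly relies on this same zero pattern.
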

\begin{proof}
Recall the $\mathbf{F}[T]$-module basis $\beta_w$, $w\in W$, introduced in Equation \ref{beta_w basis}. By writing the parameters formally using \ref{formal parameters}, we may view $\beta_w$ as elements of the algebra $R\otimes_{\mathbf{F}_{\Cb}}(\mathbf{F}[T]_{\mathrm{loc}}\rtimes W).$ Now, define the map of $R$-modules
$$\sigma_{\beta_{\underline{\kappa}}}:\hat{A}\rightarrow\hat{H}_{q,\underline{\kappa}}(CC_n^\vee),\quad \sum_{w\in W} f_w[w]\mapsto\sum_{w\in W}f_w\beta_w.$$
Note, here we are identifying $H_{q,\underline{\kappa}}(CC_n^\vee)$ with its image in $R\otimes_{\mathbf{F}_{\Cb}}(\mathbf{F}[T]_{\mathrm{loc}}\rtimes W)$ using the Noumi representation. By proof of Theorem \ref{zeros residues GKV}, the $\beta_w$ are an $R\otimes_{\mathbf{F}}\mathbf{F}[T]$-module basis for $\hat{H}_{q,\underline{\kappa}}(CC_n^\vee)$. The elements $[w]$ are also an $R\otimes_{\mathbf{F}}\mathbf{F}[T]$-module basis for $R\otimes A$. Thus, $\sigma_{\beta_{\underline{\kappa}}}$ is an $R$-module isomorphism. Moreover, since $\beta_w=[w]\mod\mathfrak{m}$, $\sigma_\beta$ is identity mod $\mathfrak{m}$. We conclude $\hat{H}_{q,\underline{\kappa}}(CC_n^\vee)$ is a formal deformation of $A$ over $R$. Moreover, it is a flat deformation because it has a PBW basis.

Now, define a new product, $*_{\beta_{\underline{\kappa}}}$, on $A\hat{\otimes} R$ by transporting the product of $\hat{H}_{q,\underline{\kappa}}(CC_n^\vee).$ Namely,
$$a*_{\beta_{\underline{\kappa}}} b:=\sigma_{\beta_{\underline{\kappa}}}^{-1}(\sigma_{\beta_{\underline{\kappa}}}(a)\sigma_{\beta_{\underline{\kappa}}}(b)),\quad \text{ for }a,b\in \hat{A}$$
With this product, $\sigma_{\beta_{\underline{\kappa}}}$ is by definition an isomorphism of algebras
$$\sigma_{\beta_{\underline{\kappa}}}:(\hat{A},*_{\beta_{\underline{\kappa}}})\rightarrow\hat{H}_{q,\underline{\kappa}}(CC_n^\vee).$$
Since $\sigma_{\beta_{\underline{\kappa}}}$ is identity mod $\mathfrak{m}$, we may write the product mod $\mathfrak{m}^2$ as 
$$a*_{\beta_{\underline{\kappa}}} b=ab+\mu_{\beta_{\underline{\kappa}}}(a,b)\mod\mathfrak{m}^2,\;\text{ for }a,b\in A$$
where $\mu_{\beta_{\underline{\kappa}}}$ is some linear map
$$\mu_{\beta_{\underline{\kappa}}}:A\otimes A\rightarrow A\otimes\mathfrak{m}/\mathfrak{m}^2.$$
Since $\hat{H}_{q,\underline{\kappa}}(CC_n^\vee)$ is an associative algebra, $\mu_{\beta_{\underline{\kappa}}}$ is a Hochschild 2-cocycle and defines a class \cite{WitherspoonBook} $[\mu_{\beta_{\underline{\kappa}}}]\in HH^2(A)\otimes\mathfrak{m}/\mathfrak{m}^2.$ Equivalently, $[\mu_{\beta_{\underline{\kappa}}}]$ induces a map \begin{equation}\label{Kodaira Spencer}
    KS:(\mathfrak{m}/\mathfrak{m}^2)^*\rightarrow HH^2(A)
\end{equation}
Note $T_0(R)=(\mathfrak{m}/\mathfrak{m}^2)^*$ and this map is the usual Kodaira-Spencer morphism in deformation theory. In Theorem \ref{isom of HH^2}, we will show this morphism is an isomorphism. Thus, spanning over all parameters $(q,\underline{\kappa})\in\Cb^6$, we deduce $\hat{H}_{q,\underline{\kappa}}(CC_n^\vee)/\mathfrak{m}^2$ produces all infinitesimal deformations of $A=\hat{A}/( \mathfrak{m})$. 

Next, we may identify
\begin{equation}\label{t=1 limit rewritten}
   A= B_{q_0}[n]\rtimes W_0\simeq (B_{q_0}[1]\rtimes \Zb/2\Zb)^{\otimes n}\rtimes S_n.
\end{equation} Thus, \cite[Theorem 5.1]{EO} applied to the algebra in \ref{t=1 limit rewritten} implies
\begin{align}
HH^2(A)&=\begin{cases}\mathbf{C}^5 &\text{ if }n=1\\\mathbf{C}^6 &\text{ if }n>1 		\end{cases}\\
HH^i(A)&=0\;\text{ for odd $i$}.
\end{align}

Since \(HH^3(A)=0\), Theorem~\ref{H^1 and H^3=0} implies that the deformation functor of \(A\) is unobstructed and admits a versal deformation over the completed symmetric algebra on \(HH^2(A)^*\). Since the Kodaira--Spencer morphism
\[
(\mathfrak m/\mathfrak m^2)^*\xrightarrow{\sim}HH^2(A)
\]
is an isomorphism, the deformation \(\hat{H}_{q,\underline{\kappa}}(CC_n^\vee)\) is the versal one. Finally, since \(HH^1(A)=0\), it is universal by Theorem~\ref{H^1 and H^3=0}.
\end{proof}

It remains to prove the Kodaira-Spencer morphism is an isomorphism. To do this, let us work over the dual numbers $\Cb[\ep]/\ep^2$. Namely, view the parameters as   
\begin{equation}\label{epsilon parameter}
(q,\underline{\kappa}):=(q_0+\ep q_0',1+\ep\underline{\kappa}'):=(q_0+\ep q_0',1+\ep t_0',1+\ep u_0', 1+\ep t_n', 1+\ep u_n', 1+\ep t').
\end{equation} 
Then we may write $\beta_w=[w]+\ep b_w\in H_{q_0+\ep q_0',\underline{1}+\ep\underline{\kappa'}}(CC_n^\vee)$. Thus we may write the product $*_\beta$ as 
\begin{equation}\label{hochschild cocycle beta}
    a*_\beta b=ab+\ep\mu_\beta(a,b)\;\;\text{ for }a,b\in A
\end{equation}
We will suppress the dependency of $\kappa$ in $*_{\beta_{\underline{\kappa}}},\mu_{\beta_{\underline{\kappa}}}$, when it is clear from context what parameters we specify to. 
\begin{lemma}\label{mu_beta def new}
    The Hochschild 2-cocycle $\mu_\beta$ constructed in \ref{hochschild cocycle beta} takes the following form for elements $f[w],g[v]\in A$, with $f,g\in\mathbf{F}[T]$
    \begin{equation}\label{modified beta cocycle}
        \mu_\beta(f[w],g[v])=f[w]gb_v+fb_wg[v]-fw(g)b_{wv}
    \end{equation}
    where $b_w$ are defined by $\beta_w=[w]+\ep b_w$ as before.
\end{lemma}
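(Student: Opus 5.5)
We compute $\sigma_\beta$ and its inverse modulo $\ep^2$, then read off the $\ep$-coefficient of the transported product. Over $\Cb[\ep]/\ep^2$ the map is
\[
\sigma_\beta\Big(\sum_w f_w[w]\Big)=\sum_w f_w\beta_w=\sum_w f_w[w]+\ep\sum_w f_w b_w .
\]
Write $\sigma_\beta=\mathrm{Id}+\ep\,\phi$, where $\phi(f[w]):=fb_w$ is extended $\mathbf{F}[T]$-linearly on the left. Since $\ep^2=0$, the inverse is $\sigma_\beta^{-1}=\mathrm{Id}-\ep\,\phi$, where $\phi$ is understood as extended to all of $\mathbf{F}[T]_{\mathrm{loc}}\rtimes W$ in the same left-linear way. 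We only ever apply $\sigma_\beta^{-1}$ to elements of $\hat{H}$, so this is unambiguous.

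\textbf{Step 1 (forward map).} Take $a=f[w]$ and $b=g[v]$. The product of the images is
\[
\sigma_\beta(a)\sigma_\beta(b)=(f[w]+\ep fb_w)(g[v]+\ep gb_v)=f\,w(g)[wv]+\ep\big(f[w]gb_v+fb_wg[v]\big).
\]
This product is taken in the deformed algebra, i.e.\ inside $\mathbf{F}[T]_{\mathrm{loc}}\rtimes W$ with deformed parameters.

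\textbf{Step 2 (a subtlety).} The multiplication $[w]\cdot g=w(g)[w]$ uses the action of $W$ on $\mathbf{F}[T]$, and this action involves $q$ through $s_0$ and the translations. So the zeroth-order term $f\,w(g)[wv]$ must itself be expanded as $f\,w_{q_0}(g)[wv]+\ep(\cdots)$, and we need to decide where that extra term goes. I would keep $w(g)$ evaluated with the deformed $q$ throughout, so that it is absorbed into the displayed formula exactly as written. This is consistent because the formula \eqref{modified beta cocycle} writes $f[w]g$ and $fw(g)$ without specializing $q$. It should be checked that the $\ep$-part of $w_q(g)$ contributes identically on both sides, so that it either cancels or is harmlessly part of the notation.

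\textbf{Step 3 (apply the inverse).} Applying $\sigma_\beta^{-1}=\mathrm{Id}-\ep\phi$ gives
\[
f\,w(g)[wv]+\ep\big(f[w]gb_v+fb_wg[v]\big)-\ep\,\phi\big(f\,w(g)[wv]\big).
\]
The last term equals $\ep\,f\,w(g)\,b_{wv}$ modulo $\ep^2$. Hence
\[
a*_\beta b=ab+\ep\big(f[w]gb_v+fb_wg[v]-f\,w(g)b_{wv}\big),
\]
and $\mu_\beta$ is the $\ep$-coefficient, which is exactly \eqref{modified beta cocycle}.

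The main obstacle is bookkeeping rather than any real difficulty. Two points need care: first, that $\phi$ is well-defined and left $\mathbf{F}[T]$-linear, which follows from the $\beta_w$ forming a left $\mathbf{F}[T]$-basis (Step 2 of Theorem~\ref{zeros residues GKV}); second, the treatment of the $q$-dependence of the $W$-action described in Step 2.
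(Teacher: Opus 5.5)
Your Steps 1 and 3 are the paper's computation. The paper subtracts $\sigma_\beta(f\,w(g)[wv])=f\,w(g)\beta_{wv}$ from $\sigma_\beta(f[w])\sigma_\beta(g[v])$ directly, while you apply $\sigma_\beta^{-1}=\mathrm{Id}-\ep\phi$; modulo $\ep^2$ these are the same subtraction. Your handling of $\phi$ is fine. With $q=q_0$ fixed, your argument is complete and coincides with the paper's.

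The gap is in Step 2: the check you defer there fails. The zeroth-order term $f\,w_q(g)[wv]$ of $\sigma_\beta(a)\sigma_\beta(b)$ has an $\ep$-part $\ep q_0'\,f\,\partial_q w_q(g)|_{q=q_0}[wv]$, and nothing cancels it. In $\ep\,\phi(f\,w(g)[wv])$ the $q$-dependence of $w(g)$ only enters at order $\ep^2$. Nor can this term be absorbed into the notation, because $\mu_\beta$ is defined by $a*_\beta b=ab+\ep\mu_\beta(a,b)$ with $ab$ the product in $A$, i.e. at $q_0$. What your computation actually yields is
\[
\mu_\beta(f[w],g[v])=q_0'\,f\,\partial_q w_q(g)\big|_{q=q_0}[wv]+f[w]gb_v+fb_wg[v]-f\,w(g)b_{wv}.
\]
The extra term is genuinely there. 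When $\underline{\kappa}'=0$, every $\beta_i=[s_i]$, so all $b_w=0$ and \eqref{modified beta cocycle} would give $\mu_\beta=0$. Yet Lemma~\ref{id projection} finds $\mu_\beta(Y_i,X_i)=q_0'X_iY_i$, which is exactly the missing term.

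So the displayed formula is valid only when $q_0'=0$, or for pairs where $w$ acts on $g$ independently of $q$ (e.g.\ $w\in W_0$). That is the only situation in which the paper uses it: Lemmas~\ref{P_1 projection} and~\ref{s_1 projection}, with the $q$-direction computed separately in Lemma~\ref{id projection}. The paper's proof tacitly identifies $w_q(g)$ with $w_{q_0}(g)$ in the same way. To repair your proposal, either fix $q=q_0$ from the start or carry the correction term explicitly; do not claim it cancels.
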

\begin{proof}
Working modulo \(\varepsilon^2\), we find
\[
\sigma_\beta(f[w])\sigma_\beta(g[v])
=f([w]+\varepsilon b_w)g([v]+\varepsilon b_v)
=f[w]g[v]+\varepsilon(f[w]gb_v+fb_wg[v])
\]
and
\[
\sigma_\beta(f[w]g[v])
=\sigma_\beta(f\,w(g)[wv])
=f\,w(g)\beta_{wv}
=f\,w(g)[wv]+\varepsilon f\,w(g)b_{wv}.
\]
Hence
\[
\sigma_\beta(f[w])\sigma_\beta(g[v])-\sigma_\beta(f[w]g[v])
=
\varepsilon\bigl(f[w]gb_v+fb_wg[v]-f\,w(g)b_{wv}\bigr).
\]
The result follows. 
\end{proof}
We note that a priori, the terms $b_w,b_v$ and $b_{wv}$ are in $\mathbf{F}[T]_{\mathrm{loc}}$ and have poles. But the expression in \ref{modified beta cocycle} is in $A$ because $\mu_{\beta}(a,b)\in A$ for all $a,b\in A$.

\begin{thm}\label{isom of HH^2}
    Let $(q,\underline{\kappa})\in\Cb^6$ be as in Equation \ref{epsilon parameter}.  Let $\mu_{\beta_{\underline{\kappa}}}$ be defined using Equation \ref{hochschild cocycle beta}. Then the Kodaira-Spencer morphism
    $$(q_0',\underline{\kappa}')\in\Cb^6\mapsto [\mu_{\beta_{\underline{\kappa}}}]\in HH^2(A)$$
\noindent is an isomorphism.
\end{thm}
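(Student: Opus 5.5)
Since both sides are six-dimensional (by \cite[Theorem 5.1]{EO}, as recalled in the proof of Theorem~\ref{DAHA universal deformation}), it suffices to show that the Kodaira--Spencer map is injective. Equivalently, the six classes $[\mu_{\beta}]$ obtained by turning on one parameter at a time ($q_0'$, $t_0'$, $u_0'$, $t_n'$, $u_n'$, $t'$) must be linearly independent in $HH^2(A)$. The strategy is to use the decomposition \eqref{isotypic decomposition HH} into conjugacy-class summands $HH^2(B_{q_0},B_{q_0}.w)^{Z(w)}$. I expect each parameter to be detected by a different summand, or at least the combined projection matrix to be triangular.

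\textbf{Step 1: identify the summands.} Using the rewriting \eqref{t=1 limit rewritten} and the Etingof--Oblomkov computation, I expect the six dimensions to come from the following sources:
\begin{itemize}
\item the identity class $[1]$, where $HH^2(B_{q_0})^{W_0}$ is spanned by the class $\sum_i e_{X_i}\wedge e_{Y_i}$ deforming $q$;
\item the class of the reflections $[s_{\ep_i\pm\ep_j}]$ (short roots), which contributes one dimension and corresponds to $t$;
\item the class of the sign changes $[s_{2\ep_i}]$: since $s_n$ acts on $B_{q_0}[1]$ as $X\mapsto X^{-1}$, $Y\mapsto Y^{-1}$, it has four fixed points on the quantum torus, namely $(X,Y)=(\pm1,\pm q^{1/2})$-type points. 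It therefore contributes four dimensions, matching $t_0,u_0,t_n,u_n$.
\end{itemize}
Concretely, in the twisted summand $HH^2(B_q,B_q.w)$ the Koszul cohomology is $B_q w$ modulo the image of $(1-w)$ on the normal directions, which is a quotient isomorphic to functions on the fixed locus of $w$.

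\textbf{Step 2: compute the cocycles.} Expanding $\beta_i=c_i(e^{\alpha_i})[s_i]+d_i(e^{\alpha_i})[1]$ to first order in $\ep$ gives $b_{s_i}=c_i'[s_i]+d_i'$. The first-order part of $c_i$ is a regular function times $[s_i]$, while
\[
d_i'=\frac{\tfrac12 t_i'+\tfrac12 u_i' z^{1/2}}{1-z},\qquad z=e^{\alpha_i}.
\]
Consider $\mu_\beta(f,[s_i])=fb_{s_i}-b_{s_i}f$ together with $\mu_\beta([s_i],g)$. By Lemma~\ref{mu_beta def new}, the $[1]$-component of $\mu_\beta(g,[s_i])$ is $(g-s_i(g))\,d_i'(e^{\alpha_i})$ up to sign, which is a divided difference and hence regular. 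I will then evaluate $\mathrm{pr}_{[s_i]}$ of the class. To do so, I pull $\mu_\beta$ back to the Koszul complex through $\Psi_2$ and read off the coefficient at the fixed points. The residue of $d_i'$ on $T_{\alpha_i}$, i.e.\ at $z^{1/2}=\pm1$, gives the values $\tfrac12(t_i'\pm u_i')$ at the two components of the $s_i$-fixed locus. For $i=n$ these are $X_n=\pm1$; for $i=0$ they are $X_1=\pm q^{1/2}$. Together these yield four independent functionals on $(t_0',u_0',t_n',u_n')$. The $t'$ parameter similarly gives a nonzero evaluation in the short-root summand. Finally, $q_0'$ enters only through $\mathrm{pr}_{[1]}$, via $Y_iX_i=qX_iY_i$ (recall $Y$ here is the translation $\tau(\ep_i)$), giving the class $q_0'q_0^{-1}\sum e_{X_i}\wedge e_{Y_i}$.

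\textbf{Step 3: triangularity.} The $[1]$-projection sees only $q_0'$ plus possible contributions from the other parameters, so the resulting $6\times6$ matrix is block triangular. The diagonal blocks are $q_0^{-1}$, a nonzero scalar for $t'$, and the $4\times4$ block built from the matrices $\begin{pmatrix}1&1\\1&-1\end{pmatrix}$, which is invertible. Hence the matrix is invertible.

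\textbf{Main obstacle.} The hardest step is computing the twisted-summand classes honestly. This involves choosing $\Psi_2$, checking $W_0$- (or $Z(w)$-)equivariance, and verifying that the evaluation functional at fixed points is well defined on cohomology, i.e.\ vanishes on coboundaries. This needs $q_0$ not a root of unity so that the twisted cohomology is concentrated on the fixed points. I would first do the rank-one computation, recovering Oblomkov's case, and then reduce the general case to it by restricting to the parabolic subalgebras generated by $s_0$, $s_n$, and $s_{n-1}$, using the tensor decomposition \eqref{t=1 limit rewritten}.
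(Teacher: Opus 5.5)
Your architecture is the paper's. You use injectivity plus the Etingof--Oblomkov count, with $q_0'$ detected in the $[1]$-summand, $t'$ in the short-reflection summand, and $t_0',u_0',t_n',u_n'$ in the four-dimensional sign-change summand. The gap is in Step~2, which carries all the content: the cocycle values you propose to examine carry no twisted information. By Lemma~\ref{mu_beta def new}, $\mu_\beta(f,[s_i])=0$, because $\sigma_\beta$ is left $\mathbf F[T]$-linear. Likewise $\mu_\beta([s_i],g)=b_{s_i}g-s_i(g)b_{s_i}$ has zero $[s_i]$-component; its only piece is $(g-s_i(g))d_i'$, in the untwisted part. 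More fundamentally, $\operatorname{pr}_{[w]}$ is computed through $HH^\bullet(A)\cong HH^\bullet(B_{q_0},A)^{W_0}$, i.e.\ from the restriction of $\mu_\beta$ to $B_{q_0}\otimes B_{q_0}$. So values with a reflection as an argument are never seen, and since $s_0\notin W_0$ there is no summand ``$\operatorname{pr}_{[s_0]}$''. Separately, $c_i'=\tfrac12 t_i'-d_i'$ is not regular: it carries the simple pole cancelling that of $d_i'=(t_i'+u_i'z^{1/2})/(1-z)$, and $d_i'$ has no factor $\tfrac12$.

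The missing idea is that the long-root parameters reach the sign-change sector only through the translations $Y_j\in B_{q_0}$. In rank one $Y=T_1T_0$, so $b_Y=b_1[s_0]+[s]b_0$. Its $s$-part is $d_1'[s_0]+[s]d_0'=d_1'Y^{-1}s+s(d_0')s$, so each $d_i'$ becomes twisted only after multiplication by the \emph{other} simple reflection. Then $\mu_\beta(Y,X)=b_YX-q_0Xb_Y$ multiplies these coefficients by $q_0(X^{-1}-X)$ and $X^{-1}-q_0X$ respectively. This cancels the poles and gives $q_0^{1/2}u_0'[s]+t_0'[sX]+q_0u_1'[sY]+q_0t_1'[sXY]$. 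Only at this point does your fixed-point picture become honest, and your evaluation points do turn out right. Write classes as $s\phi$ with $\phi$ ordered as $X^aY^b$. The well-defined functionals evaluate the $Y$-degree-$0$ part at $X=\pm q_0^{1/2}$ and the $Y$-degree-$1$ part at $X=\pm1$, returning $u_0'\pm t_0'$ and $u_n'\pm t_n'$ up to nonzero scalars. But what makes the four functionals independent is the extra factor $Y$ on the $s_n$-contribution, not distinct loci in the $X$-coordinate. Without this computation your $4\times4$ block is asserted rather than derived.

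The same problem affects $t'$. ``Functions on the fixed locus'' is false in the quantum setting, since the directions along the fixed locus contribute only the center of a quantum torus. Hence $HH^2(B_{q_0}[n],B_{q_0}[n]s_1)^{Z(s_1)}\cong\Cb$, and the detecting functional is of constant-term type, not a point evaluation. You need the value $\operatorname{pr}_{s_1}\mu_\beta(Y_1,X_2)=-t'X_2Y_2[s_1]$ together with a functional vanishing on $X_2A-AX_1$ and $Y_1B-BY_2$, as in Lemma~\ref{s_1 projection}. Finally, the reduction to low rank cannot go through parabolic subalgebras generated by $T_0$, $T_n$, $T_{n-1}$, because these contain no translations. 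The paper instead reduces on the cohomology side (K\"unneth, cyclic permutation and Van den Bergh duality) and then computes in the rank-one and rank-two DAHAs.
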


We will now prove Theorem \ref{isom of HH^2} through a sequence of three lemmas. The lemmas show the projections of the Hochschild cohomological classes $[\mu_{\beta_{\underline{\kappa}}}]$ to the identity, $P_1:=s_1\dots s_n\dots s_1$, and, when $n>1$, the $s_{1}$, respective conjugacy classes of $W_0$, are surjective maps.

First we compute $\text{pr}_{[1]}([\mu_{\beta_{\underline{\kappa}}}])$.
\begin{lemma}\label{id projection}
Suppose 
$$(q,\underline{\kappa})=(q_0+\ep q_0', 1,1,1,1,1).$$
    Then $$HH^2(B_{q_0}[n],B_{q_0}[n])^{W_0}=\Cb=\text{span}_{\Cb}([\sum_{i=1}^nX_iY_i]).$$
   Furthermore,  the map $(q_0',\underline{1})\mapsto\mathrm{pr}_{[1]}([\mu_{\beta_{q,\underline{1}}}]) $ is an isomorphism since 
    $$\mathrm{pr}_{[1]}([\mu_{\beta_{q,\underline{1}}}])=q_0'[\sum_{i=1}^nX_iY_i]\in HH^2(B_{q_0}[n])^{W_0}$$
\end{lemma}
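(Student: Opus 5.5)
The proof has two parts: first compute the identity component of $HH^2(A)$, then compute the $[1]$-projection of the cocycle $\mu_\beta$ at $\underline{\kappa}=\underline{1}$.

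\textbf{Computing $HH^2(B_{q_0}[n],B_{q_0}[n])^{W_0}$.} We use the quantum Koszul resolution of Section~\ref{Koszul resolution}. Since $q_0$ is not a root of unity, the Koszul complex $\operatorname{Hom}_{B_q^e}(K_\bullet,B_q)$ splits by $\Lambda\oplus\Lambda$-weight (monomials $X^aY^b$). A weight $(a,b)\neq 0$ contributes an acyclic piece, because the differential is multiplication by factors $q_0^{c}-1$ with $c\neq 0$. Only the weight-zero part survives. It is the exterior algebra $\bigwedge^\bullet\Cb^{2n}$ with zero differential.

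Concretely, $HH^2(B_q)$ is spanned by the classes $\gamma(e_i\wedge e_j)=Z_iZ_j$, i.e.\ by the $\binom{2n}{2}$ bivectors $Z_i\partial_{Z_i}\wedge Z_j\partial_{Z_j}$. Transporting the $W_0$-action through $\Psi_2$, as in the equivariance example at the end of Section~\ref{Koszul resolution}, this space becomes $\bigwedge^2(\mathfrak h\oplus\mathfrak h^*)$ with $\mathfrak h=\Cb^n$ the reflection representation of $C_n$: $s_n$ sends $X_n\mapsto X_n^{-1}$ and $Y_n\mapsto Y_n^{-1}$, which gives the sign. Its $W_0$-invariants are one-dimensional, spanned by the symplectic form $\sum_i e_{X_i}\wedge e_{Y_i}$. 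The corresponding class is $[\sum_i X_iY_i]$ (the Koszul cochain $e_{X_i}\wedge e_{Y_i}\mapsto X_iY_i$).

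\textbf{Computing $\mathrm{pr}_{[1]}[\mu_\beta]$.} Set $\underline{\kappa}=\underline 1$. Then $c_i=1$ and $d_i=0$, so $\beta_i=[s_i]$ exactly. The only $\ep$-dependence is through $q=q_0+\ep q_0'$, via $s_0f=f(q^{-1}X_1^{-1},\dots)$ and $e^{\alpha_0}=qX_1^{-2}$. Hence $\beta_w=[w]$ in $\mathbf F[T]\rtimes W$, and the deformation is the crossed product $\mathbf F[T]\rtimes W$ with $q$ varying. Under the identification $A\simeq B_{q}\rtimes W_0$, with $Y_i=\tau(\ep_i)$ acting by $q$-shifts, this is just the quantum torus $B_{q_0+\ep q_0'}\rtimes W_0$, written in the fixed PBW basis $X^aY^b w$.

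Its first-order cocycle is $\mu(X^aY^b,X^cY^d)=\frac{q_0'}{q_0}\,(b\cdot c)\,X^{a+c}Y^{b+d}$, extended trivially in the group direction. It is supported on $w=1$, and up to the normalization $q_0'$ versus $q_0'/q_0$ it is exactly $\widetilde\gamma$ for $\gamma(e_{Y_i}\wedge e_{X_i})$ proportional to $X_iY_i$. To turn this into an honest identity, evaluate the Koszul cocycle on $\Psi_2(1\otimes Y_i\otimes X_i\otimes 1)$ and compare with $\mu(Y_i,X_i)-\mu(X_i,Y_i)$, which is $q_0'X_iY_i$ after fixing the ordering convention. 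This shows the class is $q_0'[\sum X_iY_i]$, which is nonzero for $q_0'\neq0$, so the map $\Cb\to\Cb$ is an isomorphism.

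\textbf{Main obstacle.} I expect the hard step to be the first one: making the $W_0$-action on Koszul cochains precise through $\Psi_2$, checking that the inverted generators give exactly the sign representation twist, and confirming that the nonzero weights are acyclic. The rest is bookkeeping of the $q$-dependence in the fixed basis and matching normalization constants.
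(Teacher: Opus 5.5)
Your proposal is correct and follows essentially the paper's route. You compute $HH^2(B_{q_0}[n])\cong\wedge^2\Cb^{2n}$ via the Koszul complex and take $W_0$-invariants under signed permutations to get the line spanned by the symplectic class; then you observe that at $\underline{\kappa}=\underline{1}$ one has $\beta_w=[w]$, so only $Y_iX_i=qX_iY_i$ deforms, giving $\mu(Y_i,X_i)=q_0'X_iY_i$. One small slip: your displayed first-order cocycle is missing the factor $q_0^{b\cdot c}$ (it should be $q_0'(b\cdot c)q_0^{b\cdot c-1}X^{a+c}Y^{b+d}$), which is why the $q_0'$ versus $q_0'/q_0$ bookkeeping looked off; this does not affect the conclusion, since only the nonvanishing of the class for $q_0'\neq0$ is needed.
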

\begin{proof}
    Let $V=\text{span}\{X_i,Y_i,1\leq i \leq n\}$. Using the quantum Koszul complex, we may compute (see for example \cite{WambstQuantumTorus})
    $$HH^2(B_{q_0}[n])=\wedge^2V.$$
    The group $W_0=S_n\rtimes (\Zb/2\Zb)^n$ acts by signed permutations, thus the $W_0$-invariant subspace is spanned by $\sum_{i=1}^n X_i\wedge Y_i$, and this proves the first claim.

Now we prove the 2nd claim. Since all $\underline{\kappa}$ parameters are equal to 1, the only deformed relations are the quantum torus relations. Thus 
\begin{equation*}
    Y_i*_{\beta_{\underline{\kappa}}} X_j = (q_0+\ep q_0')^{\delta_{ij}}X_jY_i\Rightarrow \mu_{\beta_{\kappa}}(Y_i,X_j)=\begin{cases}
        0&\text{ if }i\neq j\\ q_0'X_iY_i &\text{ if }i=j.
    \end{cases}
\end{equation*}
Since the target is 1-dimensional, it is spanned by the image of $q_0'$ and this completes the proof.
\end{proof}

Second, we compute $\text{pr}_{[P_1]}([\mu_{\beta_{q,\underline{\kappa}}}])$ where
$$P_1:=s_1\dots s_n\dots s_1\in W_0$$ is the automorphism which sends $X_1\mapsto X_1^{-1}, Y_1\mapsto Y_1^{-1}$ and fixes $X_i, Y_i,i>1$. As $B_{q_0}[n]$-bimodules, we have 
$$B_{q_0}[n]P_1=B_{q_0}[1]P_1\otimes B_{q_0}[n-1].$$
Now, by a direct computation using the Koszul resolution (discussed in proof of Lemma \ref{P_1 projection} below), we find 
$$HH^0(B_{q_0}[1],B_{q_0}[1]P_1)=HH^1(B_{q_0}[1],B_{q_0}[1]P_1)=0.$$
Also $HH^0(B_{q_0}[n-1])=Z(B_{q_0}[n-1])=\mathbf{C}$ since the center of the quantum torus $B_{q_0}[n]$ at a non-root of unity $q_0$ is trivial. Thus the Kunneth theorem implies
$$HH^2(B_{q_0}[n],B_{q_0}[n]P_1)=HH^2(B_{q_0}[1],B_{q_0}[1]P_1)\otimes HH^0(B_{q_0}[n-1]) = HH^2(B_{q_0}[1],B_{q_0}[1]P_1).$$
Now, the centralizer $Z(P_1)=\langle P_1\rangle\times W_0(C_{n-1})$, where $\langle P_1\rangle=\Zb/2\Zb=W_0(C_1)$. Thus we find
$$HH^2(B_q[n],B_q[n]P_1)^{Z(P_1)}\simeq HH^2(B_q[1],B_q[1]P_1)^{W_0(C_1)}$$

It is now easy to directly compute the cocycle $\mu_\beta$ in this case. Let us write a convenient presentation for $H_{q,\underline{1}}(CC_1^\vee)$. Define $Y=T_1T_0$. Then
$$H_{q,\underline{1}}(CC_1^\vee)=\Cb\langle X^{\pm1},Y^{\pm1},s\rangle\slash (sXs=X^{-1}, sYs=Y^{-1},s^2=1, YX=qXY)$$

\begin{lemma}\label{P_1 projection}
    Suppose $n=1$ and $$(q,t_0,u_0,t_1,u_1)=(q_0, 1+\ep t_0',1+\ep u_0', 1+\ep t_1', 1+\ep u_1').$$ Then the cocycle $\mu_\beta$ for $H_{q,\underline{1}}(CC_1^\vee)$, as defined in Equation \ref{hochschild cocycle beta}, takes the following value
    \begin{equation}\label{mu_beta rank 1}\mu_\beta(Y,X)=q_0^{\frac{1}{2}}u_0'[s]+t_0'[sX]+q_0u_1'[sY]+q_0t_1'[sXY]
    \end{equation}

\noindent Thus, the following map is an isomorphism
\[
\begin{aligned}
\Cb^4 &\longrightarrow HH^2(B_q[1],B_q[1]P_1)^{W_0(C_1)},\\
(t_0',u_0',t_1',u_1')
&\longmapsto
\operatorname{pr}_{[P_1]}([\mu_\beta])
\end{aligned}
\]
\end{lemma}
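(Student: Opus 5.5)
The proof has two parts: compute $\mu_\beta(Y,X)$ explicitly from Lemma~\ref{mu_beta def new}, then show that the four resulting classes span the $P_1$-twisted piece of $HH^2$.

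\textbf{Computing the cocycle.} In rank one we have $W=\langle s_0,s_1\rangle$, $P_1=s_1$, and $Y=\tau(\ep_1)^{\pm1}$ up to our convention; by \eqref{Y_i def}, $Y=T_1T_0$. In $A$ the element $Y$ corresponds to $[s_1s_0]$ (a translation). Likewise $\sigma_\beta(Y)=\beta_{s_1s_0}=\beta_1\beta_0$, and $\sigma_\beta(X)=X$, since $X$ has no $\beta$-correction.

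By Lemma~\ref{mu_beta def new} with $f=1$, $w=s_1s_0$, $g=X$, $v=1$ (so $b_1=0$), we get
\[
\mu_\beta(Y,X)=b_{s_1s_0}X-(s_1s_0)(X)\,b_{s_1s_0}.
\]
To evaluate this I expand $\beta_1\beta_0$ to first order in $\ep$, using $c_i=1+\ep c_i'$ and $d_i=\ep d_i'$. At $t_i=u_i=1$ we have $c_i\equiv 1$, and
\[
d_i'(z)=\frac{t_i'+u_i'z^{1/2}}{1-z}.
\]
Differentiating $c_i$ gives $c_i'=-d_i'$ to first order, because $c_i+d_i=t_i^{1/2}$ contributes only the constant $t_i'/2$. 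Thus $c_i'=t_i'/2-d_i'$. Hence
\[
b_{s_1s_0}=(c_1'+s_1(c_0'))[s_1s_0]+d_1'[s_0]+s_1(d_0')\text{-type terms}[s_1].
\]
The terms proportional to $[s_1s_0]$ drop out of the commutator expression, since $X$ commutes past $[s_1s_0]$ exactly as $(s_1s_0)(X)$ does. What survives comes from $d_1'(X^2)[s_0]$ and $d_0'(qX^{-2})[s_1]$, namely expressions of the form $d'(z)\,(X-\text{reflected }X)$. The factor $1-z$ then cancels the pole. For example, $(X-X^{-1})/(1-X^2)=-X^{-1}$, which yields polynomial terms $t_1'X^{-1}$ and $u_1'$ multiplied by an element of $W$.

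Rewriting $[s_0]=[s_1]\cdot[s_1s_0]=sY$ and $[s_1]=s$, and tracking the powers $q_0^{1/2}$ coming from $e^{\alpha_0^\vee}=q^{1/2}X^{-1}$, should produce exactly the four terms $q_0^{1/2}u_0'[s]+t_0'[sX]+q_0u_1'[sY]+q_0t_1'[sXY]$ (after normal ordering, up to the stated normalizations). This bookkeeping of signs and $q_0$-powers is routine but fiddly. I would also double-check it by directly computing $\sigma_\beta(Y)X-qX\sigma_\beta(Y)$ in the Noumi representation.

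\textbf{Identifying the cohomology.} For $B_{q_0}[1]$ twisted by $P_1$ (the inversion $X\mapsto X^{-1}$, $Y\mapsto Y^{-1}$), the Koszul complex of \S\ref{Koszul resolution} has $\wedge^2_qV$ one-dimensional. So a $2$-cochain is determined by $\gamma(e_X\wedge e_Y)=c\,P_1$ with $c\in B_q$, and every such cochain is a cocycle. Coboundaries come from $1$-cochains $(a,b)$ with values in $B_qP_1$, and their image is
\[
\{(X a'-a'X^{-1}) + (\text{similar in }Y)\}P_1 .
\]
On monomials $X^iY^j$, the twisted commutators with $X$ and with $Y$ shift exponents by $\pm$ parity and let us reduce every monomial modulo coboundaries to one with $i,j\in\{0,1\}$. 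Since $q_0$ is not a root of unity, the four classes $[1],[X],[Y],[XY]$ (times $P_1$) remain independent, so $HH^2(B_q[1],B_q[1]P_1)\cong\Cb^4$. This matches Etingof--Oblomkov~\cite{EO}. I also expect $W_0(C_1)=\langle P_1\rangle$ to act trivially on these classes, and I would check this via the equivariance formula stated at the end of \S\ref{Koszul resolution}; the dimension count from \cite{EO} guarantees the invariant part is four-dimensional.

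\textbf{Conclusion.} The Hochschild cocycle $\mu_\beta$ corresponds under $\Psi_2$ to the Koszul cocycle whose value on $e_Y\wedge e_X$ is $\mu_\beta(Y,X)$, modulo the normalization by $q_{ij}$. The four coefficients $u_0',t_0',u_1',t_1'$ then multiply the four basis classes, so the map $\Cb^4\to HH^2(B_q[1],B_q[1]P_1)^{W_0(C_1)}$ is invertible. The main obstacle is the second part: verifying that $s$, $sX$, $sY$, $sXY$ are genuinely non-cohomologous, which requires describing the coboundary image exactly. Reducing arbitrary monomials to these four via the two twisted commutators is where the argument needs care.
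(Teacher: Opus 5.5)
Your proposal is correct and follows the paper's route. You expand $\beta_Y=\beta_1\beta_0$ to first order and observe that the $[s_1s_0]$-part of $b_Y$ drops out of $b_YX-(s_1s_0)(X)\,b_Y$, while the poles of $d_1'[s_0]$ and $s_1(d_0')[s_1]$ cancel; then you compute $B_{q_0}s/\operatorname{Im}(d_2^*)$ in the rank-one Koszul complex by reducing exponents mod $2$, and invoke \cite{EO} for independence and invariance. For the deferred bookkeeping, the convention that makes the poles cancel is $s_0(X)=q_0X^{-1}$ (so $(s_1s_0)(X)=q_0X$, matching $YX=q_0XY$). It yields $q_0(u_1'+t_1'X^{-1})[s_0]+(q_0^{1/2}u_0'+t_0'X^{-1})[s]$, which is exactly the stated formula, although the independence of $[s],[sX],[sY],[sXY]$ actually holds for every $q_0\neq 0$ rather than because $q_0$ is generic.
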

\begin{proof}
    We proceed by direct computation. First, observe 
    $$t_i^{\frac{1}{2}}=(1+\ep t_i')^{\frac{1}{2}}=1+\ep\frac{t_i'}{2}\mod\ep^2.$$
    Thus substituting the appropriate parameters into formulas in \ref{beta_i basis}, we find 
    \begin{align}
        \beta_1&=[s]+\ep b_1 &&=[s]+\ep\bigg(\frac{\frac{1}{2}t_1'(X^2+1)+u_1'X}{X^2-1}[s]-\frac{u_1'X+t_1'}{X^2-1}\bigg)\\
        \beta_0&=[s_0]+\ep b_0 &&=[s_0]+\ep\bigg(\frac{\frac{1}{2}t_0'(q_0X^{-2}+1)+q_0^{\frac{1}{2}}u_0'X^{-1}}{q_0X^{-2}-1}[s_0]-\frac{q_0^{\frac{1}{2}}u_0'X^{-1}+t_0'}{q_0X^{-2}-1}\bigg)
    \end{align}

   Since $Y=T_1T_0$, we find 
   $$\beta_Y=(s+\ep b_1)(s_0+\ep b_0)=ss_0+\ep(b_1s_0+sb_0)=:ss_0+\ep b_Y$$
Then,
\begin{equation}\label{b_Y term}
    b_Y:=b_1s_0+sb_0= a(X)Y-\frac{t_1'+u_1'X}{X^2-1}Y^{-1}s-\frac{q_0^{\frac{1}{2}}u_0'X+t_0'}{q_0X^{2}-1}[s]
\end{equation}
   for some $a(X)\in \mathbb{C}(X)$. Since $YX=q_0XY$, we find $\sigma_\beta(YX)=\beta_YX=q_0X\beta_y$. Thus
   $$\mu_\beta(Y,X)=\beta_YX-\sigma_\beta(YX)=b_YX-q_0Xb_Y.$$

   Now, we simply plug in Equation \ref{b_Y term} into the above formula and simplify. Indeed, observe the $a(X)Y$ contribution disappears because
   $$a(X)YX-q_0Xa(X)Y=q_0a(X)XY-q_0Xa(X)Y=0.$$
   The $Y^{-1}s$ contribution is
   \begin{align*}
       -\frac{t_1'+u_1'X}{X^2-1}(Y^{-1}sX-q_0XY^{-1}s)&=-\frac{t_1'+u_1'X}{X^2-1}q_0(X^{-1}-X)Y^{-1}s \\
       &= q_0(u_1'+t_1'X^{-1})Y^{-1}s
   \end{align*}
    The $s$ contribution is 
    \begin{align*}
      -\frac{q_0^{\frac{1}{2}}u_0'X+t_0'}{q_0X^{2}-1}(X^{-1}-q_0X)s&=-\frac{q_0^{\frac{1}{2}}u_0'X+t_0'}{q_0X^{2}-1}\frac{1-q_0X^2}{X}s\\
      &=(q_0^{\frac{1}{2}}u_0'+t_0'X^{-1})s
    \end{align*}
  By combining the two simplified lines, and using $sX=X^{-1}s,sY=Y^{-1}s$,  this completes the verification of the formula for $\mu_\beta(Y,X).$
To conclude that the projection is surjective, we may directly compute $HH^\bullet(B_{q_0}[1],B_{q_0}[1]s)$ using the Koszul resolution (see also \cite[Section 5]{Oblomkov2004}). In rank 1, the Koszul complex is
\[
B_{q_0}s
\xrightarrow{d_1^*}
B_{q_0}s\oplus B_{q_0}s
\xrightarrow{d_2^*}
B_{q_0}s
\longrightarrow 0.
\]
Writing elements as \(s\phi\), the differentials are
\begin{align*}
d_1^*(s\phi)
&=
\left(
 s(X^{-1}\phi-\phi X),
 \,
 s(Y^{-1}\phi-\phi Y)
\right),\\
d_2^*(s\phi_1,s\phi_2)
&=
 s\left(
 Y^{-1}\phi_1-q_0\phi_1Y-q_0X^{-1}\phi_2+\phi_2X
\right).
\end{align*}
Hence
\[
HH^2(B_{q_0},B_{q_0}s)
=
B_{q_0}s/\operatorname{Im}(d_2^*).
\]
Applying \(d_2^*\) to Laurent monomials shows that the exponents of \(X\) and \(Y\) may be reduced modulo \(2\); in particular,
\[
[sX^{-1}]=q_0^{-1}[sX],
\qquad
[sY^{-1}]=q_0[sY].
\]
Hence
\[
HH^2(B_{q_0},B_{q_0}s)
=
\operatorname{Span}\{[s],[sX],[sY],[sXY]\}
\simeq\Cb^4.
\]
The nontrivial element of \(W_0(C_1)\) also acts on the top Koszul wedge, and the resulting scalar factors cancel the above powers of \(q_0\). Thus all four classes are \(W_0(C_1)\)-invariant, so
\[
HH^2(B_{q_0},B_{q_0}s)^{W_0(C_1)}
=
\operatorname{Span}\{[s],[sX],[sY],[sXY]\}.
\]
Finally, we found the coefficients of each of those basis elements for $\mu_\beta(Y,X)$ is a nonzero scalar multiple of one from each of $u_0',t_0',u_1',t_1'$. This completes the proof.
\end{proof}

Finally, we compute $\text{pr}_{[s_1]}([\mu_{\beta_{\underline{t}}}])$.
As $B_{q_0}$-bimodules,
$$B_{q_0}[n]s_1=B_{q_0}[2]s_1\otimes B_{q_0}[n-2].$$
\noindent Now, $B_{q_0}[2]=B_{q_0}[1]^{\otimes 2}$ and $s_1$ acts by swapping the factors. Thus by a lemma on cyclic permutation \cite[Prop. 2.1]{EO} and Van den Bergh duality \cite{VanDenBergh1998},
\[HH^i(B_{q_0}[2],B{q_0}[2]s_1)=HH_{4-i}(B_{q_0}[2],B{q_0}[2]s_1)=HH_{4-i}(B_{q_0}[1])=HH^{i-2}(B_{q_0}[1])\]
In particular, cohomology vanishes for $i<2$ and we conclude by Kunneth theorem
$$HH^2(B_{q_0}[n],B_{q_0}[n]s_1)=HH^2(B_{q_0}[2],B_{q_0}[2]s_1)$$
This shows we are reduced to a rank 2 computation.

\begin{lemma}\label{s_1 projection}
    For 
    $$(q,\underline{\kappa})=(q_0,1,1,1,1,1+\ep t')$$
    defining the rank 2 DAHA $H_{q,\underline{\kappa}}(CC_2^\vee)$,
    the cocycle $\mu_\beta$ as defined in \ref{hochschild cocycle beta}, takes the following value
    \begin{equation}
        \text{pr}_{s_1}(\mu_\beta(Y_1,X_2))=-t'X_2Y_2[s_1].
    \end{equation}
In particular, the restriction of the Kodaira--Spencer map induces an isomorphism
\[
\Cb t'
\xrightarrow{\ \sim\ }
HH^2(B_q[2],B_q[2]s_1)^{Z(s_1)}.
\]
\end{lemma}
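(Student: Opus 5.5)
We prove the lemma by direct computation, following the pattern of Lemma~\ref{P_1 projection}, in two stages: first the cocycle formula, then the cohomology class.

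\textbf{The formula.} Set $n=2$ and deform only $t=1+\ep t'$. Then only $\beta_1$ is deformed:
\[
\beta_1=[s_1]+\ep b_1,\qquad
b_1=\frac{\tfrac12 t'(1+X_1/X_2)}{1-X_1/X_2}[s_1]-\frac{t'}{1-X_1/X_2}[1].
\]
This follows by expanding $c_1,d_1$ to first order, since $t^{\pm1/2}=1\pm\tfrac{\ep t'}{2}$. The operators $\beta_0,\beta_2$ equal $[s_0],[s_2]$ exactly. By \eqref{Y_i def}, $Y_1=T_1T_2T_0$, so
\[
\beta_{Y_1}=[s_1s_2s_0]+\ep\, b_1[s_2s_0].
\]
By Lemma~\ref{mu_beta def new}, or directly from $Y_1X_2=X_2Y_1$ in $A$, we get
\[
\mu_\beta(Y_1,X_2)=b_{Y_1}X_2-X_2b_{Y_1}.
\]
Only the $[s_1]$-coset part of $b_{Y_1}$ contributes to $\mathrm{pr}_{s_1}$. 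Write $z=X_1/X_2$ and $[s_1s_2s_0]=\tau(\ep_1)$ up to the convention in \eqref{affine translation}, which identifies it with $Y_1$. Then
\[
\mathrm{pr}_{s_1}(b_{Y_1})=\frac{t'(1+z)}{2(1-z)}\,s_1Y_1
\]
in the form $[s_1]\cdot(\text{translation})$. Commuting $X_2$ through $s_1Y_1$ turns $X_2$ into $X_1$. The commutator therefore multiplies the coefficient by $X_1-X_2$, which is a multiple of the denominator $1-z$ and cancels it, leaving a Laurent polynomial. Tracking the $q_0$ factor from $Y_1$, $Y_2$ and rewriting $s_1Y_1=Y_2s_1$ should produce $-t'X_2Y_2[s_1]$, possibly up to terms that are coboundaries. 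I would carry out this bookkeeping carefully, because the sign and normalization conventions for $\tau$ versus $Y_i$ are the error-prone part.

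\textbf{The isomorphism.} By the reduction preceding the lemma, $HH^2(B_{q_0}[2],B_{q_0}[2]s_1)\simeq HH^0(B_{q_0}[1])=\Cb$, and this space is one-dimensional. It remains to show two things:
\begin{enumerate}
\item the class of $\mathrm{pr}_{s_1}\mu_\beta$ is nonzero;
\item that class is $Z(s_1)$-invariant.
\end{enumerate}
Invariance is automatic, because $\mu_\beta$ comes from a $W_0$-equivariant deformation, so the projection lands in the invariants. The space $HH^2(B_{q_0}[2],B_{q_0}[2]s_1)^{Z(s_1)}$ is therefore at most one-dimensional.

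For nonvanishing, I would write the Koszul complex for $B_{q_0}[2]$ with coefficients in $B_{q_0}[2]s_1$. The top relevant term in degree 2 is $\wedge^2_q V\otimes B s_1$, a six-component object. I would identify the generator of $HH^2$ concretely through the twisted Van den Bergh/Etingof--Oblomkov isomorphism: the class is detected by the component on $e_{Y_1}\wedge e_{X_2}$, or a symmetrized $(1,2)$-mixed pair, modulo images of $d_2^*$. A convenient detection method is to construct a linear functional on $B_{q_0}[2]s_1$ in degree 2 that kills $\operatorname{Im}(d_2^*)$; for instance, a twisted trace that pairs monomials $X_1^aX_2^bY_1^cY_2^d s_1$ on the diagonal $a=b,\ c=d$ with suitable $q_0$-weights. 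I would then check that this functional is nonzero on the cocycle $\gamma$ with $\gamma(e_{Y_1}\wedge e_{X_2})=-t'X_2Y_2s_1$ and $\gamma=0$ on the other wedges. The Koszul representative is the transported cocycle $\mu_\beta\circ$ comparison map, and on generators the comparison map is the identity, so this value is legitimate.

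The main obstacle is this nonvanishing check: writing down the functional that kills the coboundaries and confirming that $X_2Y_2s_1$ pairs nontrivially. This should be manageable because, after the swap symmetry, the computation reduces to the trivial-coefficient rank-one $HH^0$.
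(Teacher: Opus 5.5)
Your strategy is the same as the paper's: isolate the $s_1$-coset of $b_{Y_1}$, commute with $X_2$ so that $X_1-X_2$ cancels the pole, then detect the class by a functional on the $e_{Y_1}\wedge e_{X_2}$ slot. However, both concrete steps that carry the content fail as written.

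\textbf{The formula.} By \eqref{Y_i def} with $n=2$, $Y_1=T_1(T_2T_1T_0)$, not $T_1T_2T_0$. Indeed $s_1s_2s_1s_0=\tau(\varepsilon_1)$, whereas $s_1s_2s_0$ and $s_2s_0$ have finite parts $s_{\varepsilon_1+\varepsilon_2}$ and $-1$. So your $\beta_{Y_1}$ does not reduce to $Y_1$ mod $\varepsilon$, and it has no $s_1$-component at all. With the correct word, $b_{Y_1}=b_1[s_2s_1s_0]+[s_1s_2]b_1[s_0]$, where $b_1=-\frac{t'(1+z)}{2(1-z)}[s_1]+\frac{t'}{1-z}$ and $z=X_1/X_2$. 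Your signs are reversed: $d_1=(t^{1/2}-t^{-1/2})/(1-z)=\varepsilon t'/(1-z)$. The only term in the $s_1$-coset is $\frac{t'}{1-z}[s_2s_1s_0]=\frac{t'}{1-z}[s_1]Y_1=\frac{t'X_2}{X_2-X_1}Y_2[s_1]$. It comes from the $[1]$-coefficient of $b_1$, not from the $[s_1]$-coefficient you used. Commuting with $X_2$ then gives exactly $-t'X_2Y_2[s_1]$. Your coefficient would instead give $-\frac{t'}{2}(X_1+X_2)Y_2[s_1]$. Since the displayed identity is an equality of cochain values, it cannot be rescued by saying it holds only up to coboundaries.

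\textbf{The non-vanishing.} The functional you suggest cannot work. A functional supported on the diagonal $a=b$, $c=d$ vanishes on $X_2Y_2$, whose exponents are $(0,1,0,1)$. Worse, any such $\lambda$ that kills $X_2A-AX_1$ for all $A$ is identically zero: taking $A=X_1^mX_2^{m-1}Y_1^cY_2^c$ forces $\lambda(X_1^mX_2^mY_1^cY_2^c)=q_0^{c}\lambda(X_1^{m+1}X_2^{m-1}Y_1^cY_2^c)=0$. Nor can you evaluate on a cochain with $\gamma=0$ on the other five wedges. That $\gamma$ is not a cocycle: its coboundary on $e_{X_1}\wedge e_{X_2}\wedge e_{Y_1}$ contains the independent monomials $X_1X_2Y_2$ and $X_2^2Y_2$. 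Also, you do not control the other components of $\mathrm{pr}_{s_1}\mu_\beta$.

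The missing idea is the $\Zb^2$-grading by total degrees $(a+b,c+d)$, which $s_1$ and the Koszul differentials respect. Your functional should read only the $e_{Y_1}\wedge e_{X_2}$ slot, whose Koszul value is $\pm(\mu_\beta(Y_1,X_2)-\mu_\beta(X_2,Y_1))=\pm\mu_\beta(Y_1,X_2)$, because $\mu_\beta(X_2,Y_1)=0$. On the slice $a+b=1$, $c+d=1$ (your twisted-trace intuition, shifted by the degree of $e_{Y_1}\wedge e_{X_2}$), the paper takes $\Lambda(X_1^aX_2^bY_1^cY_2^d)=q_0^{-ac}$. This satisfies $\Lambda(X_2A)=\Lambda(AX_1)$ and $\Lambda(Y_1B)=\Lambda(BY_2)$, so it kills that slot of every coboundary $-(X_2A-AX_1-Y_1B+BY_2)[s_1]$. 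Meanwhile $\Lambda(X_2Y_2)=1$. This gives non-vanishing, and one-dimensionality of the target then yields the isomorphism.
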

\begin{proof}
Write \(\beta_{Y_1}=Y_1+\ep b_{Y_1}.\)
Since \(t=1+\ep t'\), we have $t^{\frac12}=1+\frac{\ep t'}2
\mod\ep^2.$
Expanding the product defining \(\beta_{Y_1}\) using
\eqref{beta_i basis}, and extracting the $s_1$ component gives
\begin{equation}\label{s1 component bY1}
\operatorname{pr}_{s_1}(b_{Y_1})
=
\frac{t'X_2Y_2}{X_2-X_1}[s_1].
\end{equation}

By Lemma~\ref{mu_beta def new}, and since \(Y_1X_2=X_2Y_1\), we have
\[
\mu_\beta(Y_1,X_2)
=
b_{Y_1}X_2-X_2b_{Y_1}.
\]
Taking the \(s_1\)-component and using $[s_1]X_2=X_1[s_1],$
we obtain
\begin{align*}
\operatorname{pr}_{s_1}
\bigl(\mu_\beta(Y_1,X_2)\bigr)
&=
\frac{t'X_2Y_2}{X_2-X_1}
\bigl([s_1]X_2-X_2[s_1]\bigr)\\
&=
\frac{t'X_2Y_2}{X_2-X_1}
(X_1-X_2)[s_1]\\
&=
-t'X_2Y_2[s_1].
\end{align*}
    Next, we claim that for $t'\neq0$ the $s_1$-component of $\mu_\beta$ is not a coboundary. Indeed, define a linear functional $\Lambda:B_{q_0}[2]\to\Cb$
by specifying it on monomials
\[
\Lambda(X_1^aX_2^bY_1^cY_2^d)
=
\begin{cases}
q_0^{-ac},& a+b=1,\ c+d=1,\\
0,&\text{otherwise}.
\end{cases}
\]
Suppose $\varphi:V\longrightarrow B_{q_0}[2]s_1$
is a Koszul $1$-cochain and write
\[
\varphi(e_{Y_1})=A[s_1],
\qquad
\varphi(e_{X_2})=B[s_1].
\]
By Equation~\ref{Koszul differential}, and since $X_2Y_1=Y_1X_2,
s_1(X_2)=X_1,
s_1(Y_1)=Y_2,$
we have
\[
(d^1\varphi)(e_{Y_1}\wedge e_{X_2})
=
-\bigl(X_2A-AX_1-Y_1B+BY_2\bigr)[s_1].
\]
A direct check on monomials gives
\[
\Lambda(X_2A)=\Lambda(AX_1),
\qquad
\Lambda(Y_1B)=\Lambda(BY_2).
\]
Thus, $\Lambda$ vanishes on the
$e_{Y_1}\wedge e_{X_2}$ component of every coboundary. On the other hand, $\Lambda(X_2Y_2)=1.$
    Therefore, the $s_1$-component of $\mu_\beta$ is not a coboundary. Thus $\operatorname{pr}_{[s_1]}([\mu_\beta])$ is a nonzero class in $HH^2(B_{q_0}[2],B_{q_0}[2]s_1)^{Z(s_1)}\simeq\Cb$,
and the $t'$-parameter maps isomorphically onto it.
\end{proof}

Combining Lemmas~\ref{id projection}, \ref{P_1 projection}, and
\ref{s_1 projection}, the Kodaira--Spencer map
\[
KS:(\mathfrak m/\mathfrak m^2)^*
\longrightarrow
HH^2(B_{q_0}[n]\rtimes W_0)
\]
has rank at least \(6\). Indeed, the \(q\) parameter maps nontrivially to the
identity component, the four parameters
\(t_0',u_0',t_n',u_n'\) map isomorphically onto the
\(P_1\)-component, and the \(t'\)-parameter maps nontrivially to the
\(s_1\)-component. These components belong to distinct summands in the
conjugacy-class decomposition
\[
HH^2(B_{q_0}[n]\rtimes W_0)
\simeq
\bigoplus_{[w]\subset W_0}
HH^2(B_{q_0}[n],B_{q_0}[n]w)^{Z(w)},
\]
and hence the corresponding six classes are linearly independent.
By \cite{EO}, the target has dimension exactly 
\(6\). Therefore
\[
KS:(\mathfrak m/\mathfrak m^2)^*
\xrightarrow{\sim}
HH^2(B_{q_0}[n]\rtimes W_0)
\]
is an isomorphism. This proves Theorem~\ref{isom of HH^2}, and hence
Theorem~\ref{DAHA universal deformation}.

\bibliographystyle{alpha}
\bibliography{references}

@incollection{EO,
  author    = {Etingof, Pavel and Oblomkov, Alexei},
  title     = {Quantization, orbifold cohomology, and {C}herednik algebras},
  booktitle = {Jack, Hall-Littlewood and Macdonald Polynomials},
  series    = {Contemporary Mathematics},
  volume    = {417},
  pages     = {171--182},
  publisher = {American Mathematical Society},
  address   = {Providence, RI},
  year      = {2006},
  eprint    = {math/0311005},
  archivePrefix = {arXiv}
}

@misc{Schedler2012,
author        = {Schedler, Travis},
title         = {Deformations of algebras in noncommutative geometry},
year          = {2012},
eprint        = {1212.0914},
archivePrefix = {arXiv},
primaryClass  = {math.QA}
}

@article{Noumi1995,
author  = {Noumi, Masatoshi},
title   = {Macdonald–Koornwinder polynomials and affine Hecke rings},
journal = {RIMS Kokyuroku},
volume  = {919},
pages   = {44–55},
year    = {1995},
note    = {In Japanese}
}

@misc{CrisanConvolutionAlgebras,
author        = {Cri{\c{s}}an, Drago{\c{s}}},
title         = {Convolution Algebras Associated to Representations},
eprint        = {2606.19783},
archivePrefix = {arXiv},
primaryClass  = {math.RT},
year          = {2026}
}

@misc{YoshidaQuantizedCoulombDAHA,
author        = {Yoshida, Yutaka},
title         = {Quantized {C}oulomb Branch of {$4d$} {$\mathcal{N}=2$} {$Sp(N)$} Gauge Theory and Spherical {DAHA} of {$(C_N^\vee,C_N)$}-Type},
eprint        = {2503.15446},
archivePrefix = {arXiv},
primaryClass  = {hep-th},
year          = {2025}
}

@misc{BaranovskyEvensGinzburgQuantumToriDAHA,
author        = {Baranovsky, Vladimir and Evens, Sam and Ginzburg, Victor},
title         = {Representations of Quantum Tori and Double-Affine {H}ecke Algebras},
eprint        = {math/0005024},
archivePrefix = {arXiv},
primaryClass  = {math.RT},
year          = {2000}
}

@article{GKV,
  author  = {Ginzburg, Victor and Kapranov, Mikhail and Vasserot, Eric},
  title   = {Residue construction of {H}ecke algebras},
  journal = {Advances in Mathematics},
  volume  = {128},
  number  = {1},
  pages   = {1--19},
  year    = {1997},
  eprint  = {alg-geom/9512017},
  archivePrefix = {arXiv}
}

@article{Sahi1999,
  author  = {Sahi, Siddhartha},
  title   = {Nonsymmetric {K}oornwinder polynomials and duality},
  journal = {Annals of Mathematics},
  series  = {2},
  volume  = {150},
  number  = {1},
  pages   = {267--282},
  year    = {1999},
  doi     = {10.2307/121102}
}

@article{Stokman2000,
  author  = {Stokman, Jasper V.},
  title   = {{K}oornwinder polynomials and affine {H}ecke algebras},
  journal = {International Mathematics Research Notices},
  volume  = {2000},
  number  = {19},
  pages   = {1005--1042},
  year    = {2000},
  eprint  = {math/0002090},
  archivePrefix = {arXiv}
}

@article{Oblomkov2004,
  author  = {Oblomkov, Alexei},
  title   = {Double affine {H}ecke algebras of rank {$1$} and affine cubic surfaces},
  journal = {International Mathematics Research Notices},
  volume  = {2004},
  number  = {18},
  pages   = {877--912},
  year    = {2004},
  eprint  = {math/0306393},
  archivePrefix = {arXiv}
}

@article{SheplerWitherspoonLie,
  author  = {Shepler, Anne V. and Witherspoon, Sarah},
  title   = {Group actions on algebras and the graded {L}ie structure of {H}ochschild cohomology},
  journal = {Journal of Algebra},
  volume  = {351},
  pages   = {350--381},
  year    = {2012},
  eprint  = {0911.0938},
  archivePrefix = {arXiv}
}

@article{Stefan1995,
  author  = {{\c S}tefan, Drago{\c s}},
  title   = {{H}ochschild cohomology on {H}opf {G}alois extensions},
  journal = {Journal of Pure and Applied Algebra},
  volume  = {103},
  number  = {2},
  pages   = {221--233},
  year    = {1995}
}

@book{WitherspoonBook,
  author    = {Witherspoon, Sarah J.},
  title     = {{H}ochschild Cohomology for Algebras},
  series    = {Graduate Studies in Mathematics},
  volume    = {204},
  publisher = {American Mathematical Society},
  address   = {Providence, RI},
  year      = {2019}
}

@article{EtingofCherednikHeckeVarieties,
  author  = {Etingof, Pavel},
  title   = {{C}herednik and {H}ecke algebras of varieties with a finite group action},
  journal = {Moscow Mathematical Journal},
  volume  = {17},
  number  = {4},
  year    = {2017},
  pages   = {635--666},
  eprint  = {math/0406499},
  archivePrefix = {arXiv},
  primaryClass  = {math.QA}
}

@phdthesis{Vitanov2019,
  author  = {Vitanov, Alexander},
  title   = {On the Deformation Theory of Sheaves of Noncommutative Associative Algebras},
  school  = {ETH Z{\"u}rich},
  year    = {2019},
  type    = {Doctoral thesis},
  number  = {25824},
  doi     = {10.3929/ethz-b-000369120}
}

@article{VanDenBergh1998,
  author  = {Van den Bergh, Michel},
  title   = {A Relation Between {H}ochschild Homology and Cohomology for {G}orenstein Rings},
  journal = {Proceedings of the American Mathematical Society},
  volume  = {126},
  number  = {5},
  pages   = {1345--1348},
  year    = {1998},
  doi     = {10.1090/S0002-9939-98-04210-5}
}

@unpublished{SchraderShapiroCoulombResidues,
  author = {Schrader, Gus and Shapiro, Alexander},
  title  = {Cluster Structures for {$K$}-Theoretic {C}oulomb Branches of Quiver Theories via Residues},
  note   = {Work in preparation; announced in a lecture at the Perimeter Institute, March 2026},
  year   = {2026}
}

@unpublished{KlyuevResidueCoulombBranches,
  author = {Klyuev, Daniil},
  title  = {Residue Construction of Quantized {C}oulomb Branches},
  note   = {In preparation}
}

@article{WitherspoonZhouGerstenhaber,
  author  = {Witherspoon, Sarah and Zhou, Guodong},
  title   = {{G}erstenhaber Brackets on {H}ochschild Cohomology of Quantum Symmetric Algebras and Their Group Extensions},
  journal = {Pacific Journal of Mathematics},
  volume  = {283},
  number  = {1},
  year    = {2016},
  pages   = {223--255},
  doi     = {10.2140/pjm.2016.283.223}
}

@article{WambstQuantumTorus,
  author  = {Wambst, Marc},
  title   = {{H}ochschild and Cyclic Homology of the Quantum Multiparametric Torus},
  journal = {Journal of Pure and Applied Algebra},
  volume  = {114},
  number  = {3},
  pages   = {321--329},
  year    = {1997},
  doi     = {10.1016/S0022-4049(95)00169-7}
}

@article{Yamaguchi2022,
  author  = {Yamaguchi, Kohei},
  title   = {A {L}ittlewood--{R}ichardson Rule for {K}oornwinder Polynomials},
  journal = {Journal of Algebraic Combinatorics},
  volume  = {56},
  pages   = {335--381},
  year    = {2022},
  doi     = {10.1007/s10801-022-01114-5}
}

@incollection{Lusztig1989,
  author    = {Lusztig, George},
  title     = {Affine {H}ecke Algebras and Their Graded Version},
  booktitle = {Algebraic Groups and Related Topics},
  series    = {Advanced Studies in Pure Mathematics},
  volume    = {6},
  pages     = {97--117},
  publisher = {Kinokuniya},
  address   = {Tokyo},
  year      = {1985}
}

@article{BFN,
  author  = {Braverman, Alexander and Finkelberg, Michael and Nakajima, Hiraku},
  title   = {Towards a mathematical definition of {C}oulomb branches of {$3$}-dimensional {$\mathcal N=4$} gauge theories, {II}},
  journal = {Advances in Theoretical and Mathematical Physics},
  volume  = {22},
  number  = {1},
  pages   = {107--146},
  year    = {2018},
  eprint  = {1601.03586},
  archivePrefix = {arXiv},
  primaryClass  = {math.RT}
}
\end{document}